\documentclass[12pt]{amsart}
\usepackage[T1]{fontenc}
\usepackage[utf8]{inputenc}
\usepackage{commath}
\usepackage[ maxnames=4,maxalphanames=4,style=alphabetic-verb,doi=false,
  url=false,
  isbn=false,
  eprint=false
]{biblatex}
\bibliography{main.bib}
\usepackage[margin=1in]{geometry}
\usepackage{graphicx}
\usepackage{caption}
\usepackage{subcaption}
\usepackage{amsmath,amsfonts,amssymb,amsthm,amsaddr,etoolbox}
\usepackage{latexsym,hyperref}
\usepackage[most]{tcolorbox}
\usepackage{xcolor}
\usepackage{scrextend}
\usepackage{quiver}
\usepackage{enumitem}
\usepackage[normalem]{ulem}
\usepackage{mathtools}
\usepackage[linewidth=1pt]{mdframed}
\usepackage{tikz-cd}
\usepackage{float}

\DeclareFieldFormat{postnote}{#1}
\DeclareFieldFormat{multipostnote}{#1}

\author[R. Easwar]{Rohun Easwar}
\address{Independent Researcher}

\email{rohuneaswar@gmail.com}

\author[S. Johari]{Shaurya Johari}
\address{Department of Computer Science and Engineering\\Indian Institute of Technology, Kanpur\\ Uttar Pradesh 208016, India}

\email{shauryaj23@iitk.ac.in}

\title{Conditions for traceability under a bound on the size of even-distance sets}

\keywords{traceability, block graph}
\subjclass[2020]{05C45, 05C12, 05C40}

\newcommand\restr[2]{{
  \left.\kern-\nulldelimiterspace 
  #1 
  \littletaller 
  \right|_{#2} 
  }}

\theoremstyle{plain}
\newtheorem{defn}{Definition}[section]

\newtheorem{lem}[defn]{Lemma}

\newtheorem*{prop*}{Proposition}
\newtheorem*{thm*}{Theorem}
\newtheorem{thm}[defn]{Theorem}

\newtheorem*{claim*}{Claim}

\theoremstyle{remark}
\newtheorem{rem}[defn]{Remark}
\theoremstyle{remark}

\theoremstyle{remark}

\theoremstyle{remark}

\theoremstyle{remark}

\theoremstyle{remark}

\theoremstyle{remark}

\theoremstyle{remark}

\theoremstyle{remark}

\numberwithin{equation}{section}

\theoremstyle{definition}
\newtheorem{introthm}{Theorem}

\newtheorem{introconj}[introthm]{Conjecture}

\makeatletter
\let\save@mathaccent\mathaccent
\newcommand*\if@single[3]{%
  \setbox0\hbox{${\mathaccent"0362{#1}}^H$}%
  \setbox2\hbox{${\mathaccent"0362{\kern0pt#1}}^H$}%
  \ifdim\ht0=\ht2 #3\else #2\fi
  }
\newcommand*\rel@kern[1]{\kern#1\dimexpr\macc@kerna}
\newcommand*\widebar[1]{\@ifnextchar^{{\wide@bar{#1}{0}}}{\wide@bar{#1}{1}}}
\newcommand*\wide@bar[2]{\if@single{#1}{\wide@bar@{#1}{#2}{1}}{\wide@bar@{#1}{#2}{2}}}
\newcommand*\wide@bar@[3]{%
  \begingroup
  \def\mathaccent##1##2{%
    \let\mathaccent\save@mathaccent
    \if#32 \let\macc@nucleus\first@char \fi
    \setbox\z@\hbox{$\macc@style{\macc@nucleus}_{}$}%
    \setbox\tw@\hbox{$\macc@style{\macc@nucleus}{}_{}$}%
    \dimen@\wd\tw@
    \advance\dimen@-\wd\z@
    \divide\dimen@ 3
    \@tempdima\wd\tw@
    \advance\@tempdima-\scriptspace
    \divide\@tempdima 10
    \advance\dimen@-\@tempdima
    \ifdim\dimen@>\z@ \dimen@0pt\fi
    \rel@kern{0.6}\kern-\dimen@
    \if#31
      \overline{\rel@kern{-0.6}\kern\dimen@\macc@nucleus\rel@kern{0.4}\kern\dimen@}%
      \advance\dimen@0.4\dimexpr\macc@kerna
      \let\final@kern#2%
      \ifdim\dimen@<\z@ \let\final@kern1\fi
      \if\final@kern1 \kern-\dimen@\fi
    \else
      \overline{\rel@kern{-0.6}\kern\dimen@#1}%
    \fi
  }%
  \macc@depth\@ne
  \let\math@bgroup\@empty \let\math@egroup\macc@set@skewchar
  \mathsurround\z@ \frozen@everymath{\mathgroup\macc@group\relax}%
  \macc@set@skewchar\relax
  \let\mathaccentV\macc@nested@a
  \if#31
    \macc@nested@a\relax111{#1}%
  \else
    \def\gobble@till@marker##1\endmarker{}%
    \futurelet\first@char\gobble@till@marker#1\endmarker
    \ifcat\noexpand\first@char A\else
      \def\first@char{}%
    \fi
    \macc@nested@a\relax111{\first@char}%
  \fi
  \endgroup
}
\makeatother

\def\H{\mathbb H}

\def\dist{\mathrm{dist}}
\def\diam{\mathrm{diam}(G)}
\def\evendist{\dist_{\mathrm{even}}}
\def\even{\mathrm{Even}}

\begin{document}
\begin{abstract}
We make partial progress towards a proof of Conjecture 189 of Written on the Wall II by showing that a connected graph $G$ satisfying $\max\{\mathrm{dist_{even}}(v):v\in V(G)\}\le d_2+1$, where $\mathrm{dist_{even}}(v)$ is the number of vertices at an even distance from $v$ and $d_2$ is the second smallest degree of $G$, is traceable whenever at least one of four conditions holds. These conditions involve the vertex-connectivity, order, and diameter of $G$.
\end{abstract}
\maketitle
\section{Introduction and Preliminaries}

Throughout this paper, the term \emph{graph} refers to a nontrivial simple undirected graph with finite vertex set, and $G$ will be used to denote a graph $(V(G),E(G))$. In cases where the graph under consideration is clear, we write the vertex and edge sets as $V$ and $E$ respectively. We write $xy$ for an edge joining vertices $x$ and $y$.

Write $n\coloneqq |V|$ and let $d_1\le \dots\le d_n$ be the degrees of the vertices of $G$. 

Denote the degree and the neighbourhood of $v\in V$ by $\deg_G(v)$ and $N_G(v)$ respectively, the distance between $x,y\in V$ by $\dist_G(x,y)$, and the set of vertices at a distance $k$ from $v$ by $N_k(v)$. In cases where the graph under consideration is clear, we omit it from the notation.

Denote the diameter of $G$ by $\diam$. In places where the notation $d_1$ may lead to ambiguity, we denote the minimum degree of $G$ by $\delta(G)$. The vertex-connectivity of $G$ (see \cite[Section 1.4]{Diestel}) is denoted by $\kappa(G)$.

A graph is said to be \emph{traceable} if it has a Hamiltonian path, and it is said to be \emph{Hamiltonian} if it has a Hamiltonian cycle.

For $v\in V$, define $\even_G(v)\coloneqq \{w\in V:\dist(v,w)\text{ is even}\}=\bigcup\limits_{k\ge 0} N_{2k}(v)$. Note that $v\in \even(v)$, since $\dist(v,v)=0$. Write $\evendist^G(v)\coloneqq |\even_G(v)|$. We omit $G$ when the graph under consideration is clear from context.

For $U\subseteq V$, denote by $G[U]$ the induced subgraph of $G$ with vertex set $U$.

We use notation pertaining to the addition or deletion of edges or vertices from \cite[Section 1.1]{Diestel}. For notation pertaining to paths, see \cite[Section 1.3]{Diestel}.

The following is Conjecture 189 from \emph{Written on the Wall II} (\cite{WOWII}), and in this paper we make partial progress towards its resolution.

\begin{introconj}\label{the conjecture}
Let $G=(V,E)$ be a connected graph such that 
\[\max\{\evendist(v):v\in V\}\le d_2+1.\] Then $G$ is traceable.
\end{introconj}

The theorem below summarises the results obtained in this paper.

\begin{introthm}\label{thm:main}
Let $G=(V,E)$ be a connected graph such that 
\[\max\{\evendist(v):v\in V\}\le d_2+1.\] Then $G$ is traceable if any of the following conditions holds:
\begin{enumerate}
\item $\diam\le 2$;
\item $|V(G)|\le2d_2+2$;
\item $|V(G)|=2d_2+3$ and $G$ is 2-connected; or
\item $G$ is not 2-connected.
\end{enumerate}
\end{introthm}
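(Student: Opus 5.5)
Our plan is to use the hypothesis mainly through one consequence. For any vertex $v$, the set $\even(v)$ contains $v$ and all of $N_2(v)$. Also, for any neighbour $u$ of $v$, $\even(u)\supseteq\{u\}\cup (N(v)\setminus N[u])\cup\ldots$. More usefully, for an edge $uv$, $\even(u)$ contains $N(v)\setminus N[u]$ together with $u$. Applying the bound $\evendist\le d_2+1$ at vertices of degree at least $d_2$ (all but possibly one vertex) then forces strong structure. We get $|N_2(v)|\le d_2$, and neighbourhoods of adjacent vertices overlap heavily. The first step is to record such structural lemmas: for every $v$, $1+|N_2(v)|+|N_4(v)|+\dots\le d_2+1$. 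Consequently $n\le 1+\deg(v)+|N_{\mathrm{odd}}(v)|+d_2$. In particular, since $\even(v)$ and the odd-distance set partition $V$, every vertex $v$ has at least $n-d_2-1$ vertices at odd distance.

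\textbf{Case (1), $\diam\le2$.} Here $\even(v)=\{v\}\cup N_2(v)$, so $n-\deg(v)\le d_2+1$, i.e.\ $\deg(v)\ge n-d_2-1$ for every $v$. We would then use a degree-sum (Ore-type) condition for traceability: if $\deg(x)+\deg(y)\ge n-1$ for all nonadjacent $x,y$, then $G$ is traceable. If $d_2\ge (n-1)/2$, we get $d_i\ge d_2$ for all $i\ge2$, and the single low-degree vertex is handled separately. Otherwise, $\deg(v)\ge n-d_2-1\ge (n-1)/2$ for all $v$, which suffices directly.

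\textbf{Case (2), $n\le 2d_2+2$.} All vertices except possibly one have degree at least $d_2\ge (n-2)/2$. We would use a Chvátal-type degree sequence criterion for traceability, or a direct argument. Take a longest path $P=x\dots y$. If $P$ misses a vertex, the standard rotation and crossing argument with $\deg(x)+\deg(y)\ge n-2$ gives a cycle on $V(P)$, and connectivity extends it. The exceptional vertex of degree $d_1$ would be placed as an endpoint. The even-distance hypothesis supplies the missing $+1$ in the borderline case $n=2d_2+2$: a vertex $x$ with exactly $d_2$ neighbours has at most $d_2$ vertices in $N_2\cup N_4\cup\dots$, so $G$ has diameter at most 3, with tightly controlled layers.

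\textbf{Cases (3) and (4).} For (4), take a cut vertex $c$. Each component $C$ of $G-c$ contains a vertex $v$ with all its neighbours in $C\cup\{c\}$. Counting even-distance vertices from vertices in different components shows there are exactly two components. Each component $C_i\cup\{c\}$ induces a graph that is nearly complete relative to $d_2$, and we show each has a Hamiltonian path ending at $c$; concatenating gives the path. For (3), with $n=2d_2+3$ and 2-connectivity, we would again take a longest path or cycle and use the previous estimates. A vertex $v$ of degree $d_2$ has exactly $d_2+2$ vertices outside $N[v]$, at most $d_2$ of them at even distance, so at least two lie in $N_3(v)$. This forces a bipartite-like layer structure, and 2-connectivity lets us route around it.

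The main obstacle will be (3) and the borderline of (2), where degree conditions alone fall exactly one short. There we must extract precise layer structure from the even-distance bound. We would first try to show that $G$ is then close to $K_{d_2+1,d_2+2}$ plus edges, since that graph is traceable.
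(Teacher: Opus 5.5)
\paragraph{Verdict.} Your part (1) is correct and is the paper's argument: $\deg(v)\ge n-d_2-1$ for all $v$ gives $d_1+d_2\ge n-1$, so Ore's condition holds for every pair and the case split is unnecessary. For $n\le 2d_2+1$ in (2), Chv\'atal's criterion works exactly as in Lemma~\ref{lem:nle 2d+1}. The remaining cases, however, contain no proof: $n=2d_2+2$ in (2), all of (3), and all of (4) are described only by aspirations (``tightly controlled layers'', ``route around it'', ``nearly complete'').

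\paragraph{The borderline case $n=2d_2+2$.} Your one concrete structural claim there is false. Take $d_2=3$ and vertices $x,a,b,c,d,e,f,g$ with edges $xa,xb,xc$, $da,db,dc,ea,eb,ec,de,df,ef$ and $fg$. Then $n=8=2d_2+2$, the degrees are $3,3,3,3,5,5,3,1$, and every $\evendist$ is at most $4=d_2+1$. Yet $\deg(x)=d_2$ and $\dist(x,g)=4$, so the diameter is not at most $3$.

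More fundamentally, your longest-path crossing argument falls exactly one short when the path has $n-1=2d_2+1$ vertices and both ends have degree $d_2$, and nothing in the proposal supplies the missing edge. The paper's missing ingredient is a sharpened closure rule (Lemma~\ref{lem:distance-3-closure}). If $x,y$ are nonadjacent with $\deg x+\deg y\ge n-2$ and $\dist(x,y)\ge 3$, they may be joined without affecting traceability. The proof adds a universal vertex $z$; since $N(x)\cap N(y)=\{z\}$, the crossing sets $A,B$ are forced to be two intervals.

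One closes $G$ under this rule and the $n-1$ rule, never touching the vertex of degree $<d_2$. If the closure is not traceable, Chv\'atal fails at $i=d_2+1$. This yields degree-$d_2$ vertices $u$, untouched by added edges, each with at least $d_2+1$ vertices at distance exactly $2$ (by terminality of the distance-3 rule). Hence $\evendist(u)\ge d_2+2$, a contradiction.

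\paragraph{Part (3).} You correctly find $w$ at odd distance $\ge 3$ from a degree-$d_2$ vertex $v$ (though not necessarily in $N_3(v)$). But you need a rule allowing the edge $vw$ when the degree sum is only $n-3$. That is Lemma~\ref{lem:two-connected lemmas}: in a $2$-connected graph this is permitted if $N(v)\cap N(w)=\emptyset$, and with sum $n-2$ if there is at most one common neighbour. $2$-connectivity is used precisely to handle the single unlabelled position. Adding such edges lifts every degree-$d_2$ vertex to degree $d_2+1$, and then Chv\'atal applies.

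\paragraph{Part (4).} The sides $C_i\cup\{c\}$ are not ``nearly complete''. They can have up to $2d_2+1$ vertices with minimum degree only about $d_2$, which is the same borderline situation. Degree information alone cannot give a Hamiltonian path ending at $c$: think of $K_{d_2,d_2+1}$ with $c$ in the smaller class. So the even-distance hypothesis must enter essentially here too.

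Your single-cut-vertex count also does not exclude several cut vertices, in which case one side contains a further cut vertex. The paper proceeds as follows:
\begin{itemize}
\item It rules out bridges, since $\evendist(u)+\evendist(v)=n$ for a bridge $uv$, which reduces to (2).
\item It shows by parity counting that the block graph has exactly two blocks.
\item It attaches a pendant vertex to $c$, obtaining a graph on at most $2d_2+2$ vertices.
\item It proves that graph traceable by a rooted version of the closure argument (Lemmas~\ref{lem:rooted-end-block} and~\ref{lem:auxiliary-endpoint}).
\end{itemize}

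Without these ingredients, (2) with $n=2d_2+2$, (3), and (4) remain unproved.
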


We work under the following hypothesis almost throughout the paper, and hence we label it for the sake of brevity:
\begin{equation}\tag{$\H$}\label{the hypothesis}
\text{ $G$ is a connected graph and }\max\{\evendist(v):v\in V\}\le d_2+1.
\end{equation}

The rest of the paper is organised as follows. In \S~\ref{sec 2}, we prove the first part of Theorem \ref{thm:main}, namely the diameter condition, and list some useful lemmas that will feature throughout the paper. In \S~\ref{sec 3}, we prove some ``closure lemmas'' that take inspiration from the methods presented in \cite{BondyChvatal} and are used throughout the last section. In \S~\ref{sec 4}, we apply the aforementioned closure lemmas to prove the second and third parts of Theorem \ref{thm:main}. In \S~\ref{sec 5}, we use the block graph of $G$ to conclude that a counterexample to Conjecture \ref{the conjecture} must be 2-connected, thus proving the fourth and last part of \ref{thm:main}.

\subsection*{Acknowledgements} The authors thank Chaitanya Mahawar for valuable discussions.

\section{Basic results}\label{sec 2}

\begin{rem}
Since any connected $G$ with $\le 3$ vertices is traceable, we may assume that $|V(G)|\ge 4$.
\end{rem}

The following is a direct consequence of \cite[Theorem 2]{Ore1960}.

\begin{lem}\label{lem:OreHam}
Let $G$ be a graph with $n\ge 3$. Then $G$ is traceable if every pair of distinct non-adjacent vertices $x,y\in V$ satisfies $\deg(x)+\deg(y)\ge n-1$.
\end{lem}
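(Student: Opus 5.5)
The plan is the standard reduction of traceability to Hamiltonicity: add an apex vertex and apply Ore's theorem to the enlarged graph. Let $G'$ be obtained from $G$ by adding a new vertex $z$ adjacent to every vertex of $V$. Then $|V(G')|=n+1\ge 4$. Any Hamiltonian cycle of $G'$ passes through $z$, and deleting $z$ from it leaves a Hamiltonian path of $G$. So it suffices to show that $G'$ is Hamiltonian.

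We check Ore's condition for $G'$, namely that every pair of distinct non-adjacent vertices of $G'$ has degree sum at least $|V(G')|=n+1$. Since $z$ is adjacent to all other vertices, any non-adjacent pair $x,y$ of $G'$ lies in $V$ and is non-adjacent in $G$. Moreover $\deg_{G'}(x)=\deg_G(x)+1$, and likewise for $y$. Hence
\[\deg_{G'}(x)+\deg_{G'}(y)=\deg_G(x)+\deg_G(y)+2\ge (n-1)+2=n+1,\]
using the hypothesis. By \cite[Theorem 2]{Ore1960}, $G'$ is Hamiltonian, and therefore $G$ is traceable.

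No step here is a real obstacle. The only points needing care are that $z$ creates no new non-adjacent pairs and that $n+1\ge 3$, so Ore's theorem applies. Both are immediate from the construction and from $n\ge 3$.
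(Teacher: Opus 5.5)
Your proof is correct and matches the paper's approach: the paper gives no argument beyond calling the lemma a direct consequence of Ore's theorem, and your apex-vertex reduction is the standard way to get the path version from the cycle version (it is the same device the paper records as Lemma~\ref{lem:G+z Hamiltonian} and uses in Lemma~\ref{lem:ordinary-path-closure}). The two points you flag, that $z$ creates no new non-adjacent pairs and that $n+1\ge 3$, are the only checks needed.
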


\begin{thm}\label{thm:diamle 2}
If $G$ satisfies \eqref{the hypothesis} and $\diam\le 2$ then $G$ is traceable.
\end{thm}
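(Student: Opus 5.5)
If $\diam\le 2$, then for any vertex $v$ we have $\even(v)=\{v\}\cup N_2(v)$ and $V=\{v\}\cup N(v)\cup N_2(v)$. Hence
\[
\evendist(v)=1+|N_2(v)|=n-\deg(v).
\]
Hypothesis \eqref{the hypothesis} then says $n-\deg(v)\le d_2+1$, that is, $\deg(v)\ge n-d_2-1$ for every $v\in V$. The plan is to combine this lower bound on every degree with Lemma \ref{lem:OreHam}. Since we may assume $n\ge 4$, the lemma applies, and it suffices to show that every pair of distinct non-adjacent vertices $x,y$ satisfies $\deg(x)+\deg(y)\ge n-1$.

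Take such a pair. Because the vertices are distinct, at most one of them can be the unique vertex of degree $d_1$ (if $d_1<d_2$). So we may assume $\deg(y)\ge d_2$, and $\deg(x)\ge n-d_2-1$ from the bound above. Then
\[
\deg(x)+\deg(y)\ge (n-d_2-1)+d_2=n-1,
\]
which is exactly the Ore-type condition. Lemma \ref{lem:OreHam} now gives a Hamiltonian path.

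The only subtle point, and the expected obstacle, is the step of bounding one endpoint by $d_2$ rather than $d_1$. This works because $d_2$ is the second smallest degree, so any two distinct vertices include at least one vertex of degree $\ge d_2$. The other endpoint is controlled by the universal bound coming from the even-distance hypothesis.

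The diameter assumption is used only to identify $\even(v)$ with $\{v\}\cup N_2(v)$. The degenerate case $\diam\le 1$ is covered as well: there $G$ is complete, so it is trivially traceable, and in any case there are no non-adjacent pairs to check.
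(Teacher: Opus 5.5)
Your proof is correct and matches the paper's: it uses the same identity $\evendist(v)=n-\deg(v)$ and the same appeal to Lemma~\ref{lem:OreHam}. The paper writes the bound as $d_1+d_2\ge n-1$ and then uses $\deg(x)+\deg(y)\ge d_1+d_2$; your per-vertex bound $\deg(v)\ge n-d_2-1$, together with the observation that one of two distinct vertices has degree at least $d_2$, is the same step written out.
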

\begin{proof}
Let $M\coloneqq \max\{\evendist(v):v\in V\}$.

Since $\diam\le 2$, we have $V=\{v\}\sqcup N(v)\sqcup N_2(v)$ and $\even(v)=\{v\}\sqcup N_2(v)$ for any $v\in V$.

This gives $\evendist(v)=n-\deg(v)$.

We therefore have $M=n-d_1$.

Since $G$ satisfies \eqref{the hypothesis}, we have $M\le 1+d_2$.

This gives us $n-d_1\le 1+d_2$, or equivalently, $d_1+d_2\ge n-1$.

Let $x,y\in V$ be distinct. Then $\deg(x)+\deg(y)\ge d_1+d_2\ge n-1$. In particular, this holds for all pairs of distinct non-adjacent vertices $x,y$.

Hence by Lemma \ref{lem:OreHam}, we have that $G$ is traceable.

This proves (1) from Theorem \ref{thm:main}.
\end{proof}

\begin{lem}[Chv\'{a}tal]\cite[Exercise 4.2.4]{BondyMurty1976}\label{lem:Chvatal}
A graph $G$ is traceable if for each $i<\frac{n+1}{2}$, we have $d_i\ge i$ or $d_{n-i+1}\ge n-i$.
\end{lem}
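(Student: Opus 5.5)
The plan is to reduce the statement to Chv\'{a}tal's Hamiltonicity criterion by the standard device of adjoining a universal vertex. Recall that criterion: a graph $H$ on $m\ge 3$ vertices with degree sequence $h_1\le\dots\le h_m$ is Hamiltonian if for every $i<\frac{m}{2}$ we have $h_i\le i \Rightarrow h_{m-i}\ge m-i$. I will take it as known from \cite{BondyMurty1976}, in the same way the paper cites Ore's theorem for Lemma \ref{lem:OreHam}. Given $G$ satisfying the hypothesis of Lemma \ref{lem:Chvatal}, form $G'$ by adding a new vertex $w$ adjacent to every vertex of $V$. Then $G'$ has $m=n+1\ge 3$ vertices, since we may assume $n\ge 4$ (and the case $n\le 2$ is trivial anyway).

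First I compute the degree sequence of $G'$. Every $v\in V$ has $\deg_{G'}(v)=\deg_G(v)+1\le n$, and $\deg_{G'}(w)=n$. So the sorted sequence can be taken as $d'_i=d_i+1$ for $1\le i\le n$ and $d'_{n+1}=n$. If some $d_j=n-1$, there are ties with $w$, but the sorted sequence is unaffected.

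Next I verify Chv\'{a}tal's condition for $G'$. Let $i<\frac{n+1}{2}$ with $d'_i\le i$. Since $i\le n$, this gives $d_i\le i-1<i$, so by hypothesis $d_{n-i+1}\ge n-i$. Because $i\ge 1$, the index $n+1-i$ is at most $n$, so it refers to an original vertex, and $d'_{n+1-i}=d_{n+1-i}+1\ge n+1-i=m-i$. This is exactly the required implication. Hence $G'$ is Hamiltonian.

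Finally, deleting $w$ from a Hamiltonian cycle of $G'$ leaves a path through all vertices of $V$ using only edges of $G$, so $G$ is traceable. The only delicate step is the index bookkeeping: I must check that the threshold $i<\frac{m}{2}$ for $G'$ matches $i<\frac{n+1}{2}$ in the lemma, and that the universal vertex never occupies position $n+1-i$. Both facts follow from $m=n+1$ and $i\ge 1$. I do not expect any real obstacle beyond this.
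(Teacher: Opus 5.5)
Your proof is correct, and it is the standard solution to the cited exercise. The paper gives no argument beyond citing Bondy--Murty, whose hint is to apply Chv\'{a}tal's Hamiltonicity theorem to $G$ with a universal vertex adjoined, the same device as Lemma \ref{lem:G+z Hamiltonian}. Your index bookkeeping is right: $m=n+1$, $d'_i=d_i+1$ for $i\le n$, and $n+1-i\le n$. One small nit is that you do not need the ``$n\ge 4$'' remark, which the paper states for connected graphs while this lemma concerns arbitrary graphs; the reduction only needs $m=n+1\ge 3$, i.e.\ $n\ge 2$, which holds because all graphs here are nontrivial.
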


\begin{lem}\label{lem:nle 2d+1}
If $G$ is connected and $n\le 2d_2+1$ then $G$ is traceable.
\end{lem}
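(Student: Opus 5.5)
The plan is to verify the degree-sequence condition of Chvátal's criterion (Lemma \ref{lem:Chvatal}) directly, always through its first alternative $d_i\ge i$. Since the degrees are ordered $d_1\le\dots\le d_n$, every $d_i$ with $i\ge 2$ is at least $d_2$. The hypothesis $n\le 2d_2+1$ is exactly what should force every relevant index $i$ to satisfy $i\le d_2$.

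\textbf{The index $i=1$.} Here we need $d_1\ge 1$. This holds because $G$ is connected and nontrivial (and we may assume $n\ge 4$ by the earlier remark), so $G$ has no isolated vertex.

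\textbf{Indices $2\le i<\frac{n+1}{2}$.} Since $i$ and $n$ are integers, $2i<n+1$ gives $2i\le n$. Combining this with the hypothesis,
\[2i\le n\le 2d_2+1,\]
so $i\le d_2+\tfrac12$, and integrality gives $i\le d_2$. Monotonicity of the degree sequence then yields
\[d_i\ge d_2\ge i.\]

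Hence the first alternative of Lemma \ref{lem:Chvatal} holds for every $i<\frac{n+1}{2}$, and $G$ is traceable. The only point needing care is the integrality step converting $2i\le 2d_2+1$ into $i\le d_2$; it is exactly why the bound $2d_2+1$, rather than $2d_2+2$, is what this argument supports.
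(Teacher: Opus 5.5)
Your proposal is correct and follows the paper's proof: Chvátal's criterion (Lemma \ref{lem:Chvatal}) through the first alternative, with $d_1\ge 1$ from connectivity and $d_i\ge d_2\ge i$ for $2\le i<\frac{n+1}{2}$. Your integrality step $2i\le n\le 2d_2+1\Rightarrow i\le d_2$ is slightly more careful than the paper's written chain $i\le\frac{n-1}{2}\le d_2$. That first inequality fails for even $n$ at $i=\frac{n}{2}$, although the needed conclusion $i\le d_2$ still holds there.
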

\begin{proof}
Since $G$ is connected, we have $d_1\ge 1$.

For $2\le i<\frac{n+1}{2}$, we have $i\le \frac{n-1}{2}\le d_2\le d_i$, and hence $d_i\ge i$.

We therefore have $d_i\ge i$ for each $i<\frac{n+1}{2}$.

By Lemma \ref{lem:Chvatal}, $G$ is traceable.
\end{proof}

\begin{lem}\label{lem:G+z Hamiltonian}
A graph $G$ is traceable if and only if $G+z$ is Hamiltonian, where $z$ is such that $vz\in E(G+z)$ for each $v\in V(G)$.
\end{lem}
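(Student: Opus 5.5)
The statement is a biconditional, so I will prove each direction separately by converting a Hamiltonian path of $G$ into a Hamiltonian cycle of $G+z$ and back. Throughout, $z$ is a new vertex adjacent to every vertex of $G$, so $V(G+z)=V(G)\sqcup\{z\}$. By the standing remark, $|V(G)|\ge 4$, so $G+z$ has at least $5$ vertices and degenerate cycles cause no trouble.

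For the forward direction, suppose $P=v_1v_2\cdots v_n$ is a Hamiltonian path of $G$. Since $z$ is adjacent to both $v_1$ and $v_n$, the closed walk $zv_1v_2\cdots v_nz$ is a cycle in $G+z$. It uses $z$ exactly once and every vertex of $G$ exactly once, so it is a Hamiltonian cycle of $G+z$.

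For the converse, let $C$ be a Hamiltonian cycle of $G+z$. It passes through $z$ exactly once, entering and leaving along two edges $az$ and $zb$ with $a\neq b$ in $V(G)$. Deleting $z$ from $C$ (in the notation $C-z$ of \cite[Section 1.1]{Diestel}) leaves a path from $a$ to $b$. This path contains every vertex of $G$ exactly once. Its edges are edges of $C$ not incident to $z$, and every such edge of $G+z$ already lies in $E(G)$. Hence $C-z$ is a Hamiltonian path of $G$, and $G$ is traceable.

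There is no real obstacle here. The only point needing care is that in the converse the removed edges are exactly the two edges at $z$, so what remains is a single path of $G$ rather than a union of paths. This holds because a cycle visits $z$ only once.
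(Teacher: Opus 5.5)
Your proof is correct and is exactly the paper's argument: join both endpoints of a Hamiltonian path of $G$ to $z$ for one direction, and delete $z$ from a Hamiltonian cycle of $G+z$ for the other. One small remark: you do not need the remark $|V(G)|\ge 4$, which is a reduction for the main theorem rather than a hypothesis of this lemma; the paper's convention that graphs are nontrivial, i.e.\ $n\ge 2$, already makes $zv_1\cdots v_nz$ a genuine cycle.
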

\begin{proof}
If $P$ is a Hamiltonian path in $G$, then joining its two endpoints to $z$ gives a Hamiltonian cycle in $G+z$. Conversely, deleting $z$ from a Hamiltonian cycle in $G+z$ gives a Hamiltonian path in $G$.
\end{proof}

\section{Closure lemmas}\label{sec 3}

\begin{lem}\label{lem:ordinary-path-closure}
Let $G$ have vertices $x,y$ such that $xy\notin E(G)$ and $\deg_G(x)+\deg_G(y)\ge n-1$. Then $G$ is traceable if and only if $G+xy$ is traceable.
\end{lem}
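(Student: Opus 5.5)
The plan is to reduce the statement to the Bondy--Chv\'atal closure lemma for Hamiltonian cycles by using Lemma \ref{lem:G+z Hamiltonian}; a direct rotation argument is available as a fallback. One direction is trivial: $G$ is a spanning subgraph of $G+xy$, so any Hamiltonian path of $G$ is one of $G+xy$. For the converse, let $H\coloneqq G+z$, where $z$ is adjacent to every vertex of $G$. Then $H$ has $n+1$ vertices and $\deg_H(x)+\deg_H(y)=\deg_G(x)+\deg_G(y)+2\ge n+1=|V(H)|$. Moreover $H+xy=(G+xy)+z$. By Lemma \ref{lem:G+z Hamiltonian}, applied to both $G$ and $G+xy$, it suffices to show that if $H+xy$ is Hamiltonian then so is $H$. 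This is exactly the Bondy--Chv\'atal (Ore-type) closure step.

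I would prove that closure step directly rather than cite it. Suppose $H+xy$ has a Hamiltonian cycle $C$. If $C$ avoids the edge $xy$ we are done. Otherwise, deleting $xy$ from $C$ gives a Hamiltonian path $x=u_1,u_2,\dots,u_{m}=y$ of $H$, where $m=n+1$. Define
\[S\coloneqq\{i: xu_{i+1}\in E(H)\},\qquad T\coloneqq\{i: yu_i\in E(H)\}.\]
Both are subsets of $\{1,\dots,m-1\}$, with $|S|=\deg_H(x)$ and $|T|=\deg_H(y)$; note $m-1\notin S$ since $xy\notin E(H)$. Because $|S|+|T|\ge m>m-1$, the pigeonhole principle gives an index $i\in S\cap T$. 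The cycle
\[u_1u_{i+1}u_{i+2}\cdots u_m u_i u_{i-1}\cdots u_1\]
is then a Hamiltonian cycle of $H$ that avoids $xy$.

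Alternatively, one can work with paths in $G$ directly: take a Hamiltonian path of $G+xy$ through $xy$, split it into two subpaths, and use the degree condition $\deg_G(x)+\deg_G(y)\ge n-1$ to find a crossing chord or an endpoint adjacency. That route requires a case analysis over where $x$ and $y$ sit relative to the path's ends, which is why the $G+z$ trick is cleaner.

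The only delicate point is the index bookkeeping in the pigeonhole step. We need $S$ and $T$ to live in the same $(m-1)$-element range and the degree sum in $H$ to exceed $m-1$. This is where adding $z$ pays off: it shifts the threshold from $n-1$ to $n+1=|V(H)|$.
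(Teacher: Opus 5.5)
Your proof is correct and follows the paper's route: you adjoin a universal vertex $z$, observe that $\deg_{G+z}(x)+\deg_{G+z}(y)\ge n+1=|V(G+z)|$, and use Lemma~\ref{lem:G+z Hamiltonian} on both $G$ and $G+xy$ to reduce to the Bondy--Chv\'atal closure step. The only difference is that the paper simply cites \cite[(2.1)]{BondyChvatal}, whereas you prove that step inline with the standard pigeonhole argument on $S$ and $T$, which is valid and is the same argument the paper writes out in Lemma~\ref{lem:distance-3-closure}.
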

\begin{proof}
If $G$ is traceable then $G+xy$ is clearly traceable.

We now prove the converse. Add a vertex $z$ to $G$ such that $vz\in E(G+z)$ for every $v\in V(G)$.

Observe that $\deg_{G+z}(x)+\deg_{G+z}(y)\ge n-1+2=n+1=|V(G+z)|$.

Since $G+xy$ is traceable, we have by Lemma \ref{lem:G+z Hamiltonian} that $G+z+xy$ is Hamiltonian. By \cite[(2.1)]{BondyChvatal}, we therefore have that $G+z$ is Hamiltonian. Again using Lemma \ref{lem:G+z Hamiltonian}, we obtain that $G$ is traceable.
\end{proof}

\begin{lem}\label{lem:distance-3-closure}
Let $G$ be connected. Let $x,y\in V(G)$ be such that $xy\notin E(G)$, $\deg_G(x)+\deg_G(y)\ge n-2$, and $\dist_G(x,y)\ge 3$. Then $G$ is traceable if and only if $G+xy$ is traceable.
\end{lem}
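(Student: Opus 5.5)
The plan is to exploit the fact that the two hypotheses force a rigid structure on $G$. Since $\dist_G(x,y)\ge 3$, the sets $\{x\}$, $\{y\}$, $A\coloneqq N(x)$ and $B\coloneqq N(y)$ are pairwise disjoint. Hence $\deg(x)+\deg(y)=|A|+|B|\le n-2$, and the hypothesis forces equality, so
\[V(G)=\{x\}\sqcup\{y\}\sqcup A\sqcup B.\]
In particular every vertex other than $x,y$ is adjacent to exactly one of $x,y$. Moreover, since $G$ is connected and the only edges leaving $\{x\}\cup A$ go into $B$, there is at least one edge $ab\in E(G)$ with $a\in A$ and $b\in B$. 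One direction of the lemma is trivial. For the converse, take a Hamiltonian path $P$ of $G+xy$; if it avoids $xy$ we are done. Otherwise write
\[P=v_1\dots v_{k-1}\,x\,y\,w_1\dots w_m,\]
where $P_1=v_1\dots v_{k-1}$ and $P_2=w_1\dots w_m$ are possibly empty and all other edges of $P$ lie in $G$. Then $v_{k-1}\in A$ if $P_1\neq\emptyset$, and $w_1\in B$ if $P_2\neq\emptyset$.

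The main tool is the Bondy--Chv\'atal crossing trick. If there is $j<k-1$ with $v_j\in A$ and $v_{j+1}\in B$, then
\[v_1\dots v_j\,x\,v_{k-1}\dots v_{j+1}\,y\,w_1\dots w_m\]
is a Hamiltonian path in $G$. Symmetrically, if there is $j$ with $w_j\in A$ and $w_{j+1}\in B$, then
\[v_1\dots v_{k-1}\,x\,w_j\dots w_1\,y\,w_{j+1}\dots w_m\]
works. So we may assume there are no such consecutive pairs. Reading $P_1$ forwards, every $A$ is then followed by an $A$, and since $v_{k-1}\in A$ the labels on $P_1$ have the form $B\cdots B\,A\cdots A$. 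Likewise $P_2$ has the form $B\cdots B\,A\cdots A$, beginning with $B$ because $w_1\in B$. The degenerate cases where $P_1$ or $P_2$ is empty, so that $x$ or $y$ is an endpoint of $P$, are handled by the same reasoning: if, say, $P_1=\emptyset$ and $P_2$ contains an $A$-vertex $w_j$, then $w_j\dots w_1\,y\,w_{j+1}\dots w_m$ can be extended at the front by $x$ since $w_j\sim x$, and the resulting Hamiltonian path avoids $xy$.

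The remaining case is the main obstacle. Here $P_1=B_1A_1$ and $P_2=B_2A_2$ as blocks, with $A_1\neq\emptyset$ (when $P_1\ne\emptyset$) and $B_2\neq\emptyset$, and the consecutive-pair tricks are unavailable. The only edges of $P$ joining $A$ to $B$ are the block transitions $B_1\to A_1$ and $B_2\to A_2$; if $B_1=\emptyset$ or $A_2=\emptyset$ these transitions are absent. If $A_2\neq\emptyset$, the last vertex of $B_2$ is some $w_j\in B$ with $w_{j+1}\in A$. Then
\[v_1\dots v_{k-1}\,x\,w_{j+1}\dots w_m\]
and
\[y\,w_1\dots w_j\]
are two paths in $G$ covering $V$. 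The first ends at $w_m$ and starts at $v_1$. I will try to join them using the edge $w_jw_{j+1}$ together with one further edge; any edge from an endpoint of one path into the appropriate side of the other should suffice, via a rotation in the style of P\'osa. If no such edge exists, I will fall back on the guaranteed edge $ab$ between $A$ and $B$ and perform a rotation at $a$ or $b$ to create a consecutive $A\to B$ pair, returning to the crossing trick above. Similarly, if $B_1\ne\emptyset$, I will use the transition $v_j\in B$, $v_{j+1}\in A$ in $P_1$. Finally, if $P$ contains no $A$--$B$ edge at all, then $P_1\subseteq A$ and $P_2\subseteq B$. In that case the edge $ab$ with $a=v_i$ and $b=w_l$ gives
\[v_1\dots v_i$ \dots\]
and more precisely the path
\[v_{i+1}\dots v_{k-1}\,x\,v_1\dots v_i\,w_l\dots w_1\,y\,w_{l+1}\dots w_m,\]
which uses $x\sim v_1$ since $v_1\in A$, $y\sim w_{l+1}$ since $w_{l+1}\in B$, and the edge $v_iw_l=ab$. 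This last configuration is the case I am most confident about. The mixed block cases are where I expect the careful bookkeeping to be needed, and I would first check whether a suitable choice of the $A$--$B$ edge always reduces them to this last configuration.
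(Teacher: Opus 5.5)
Most of your setup is correct. The partition $V(G)=\{x\}\sqcup\{y\}\sqcup N(x)\sqcup N(y)$, the existence of an $A$--$B$ edge, the two interior crossing moves and the resulting pattern $B\cdots B\,A\cdots A$ on $P_1$ and $P_2$ all check out. Your final construction for $V(P_1)=A$, $V(P_2)=B$ is also valid.

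The gap is the mixed case, $B_1\neq\emptyset$ or $A_2\neq\emptyset$. There you only state an intention: join $v_1\dots v_{k-1}\,x\,w_{j+1}\dots w_m$ and $y\,w_1\dots w_j$ using $w_jw_{j+1}$ ``and one further edge'', or rotate at an end of $ab$. You never specify the rotation or check that it gives a Hamiltonian path of $G$, so the proof as written does not cover this case.

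There is also a slip in the degenerate case. The path $w_j\dots w_1\,y\,w_{j+1}\dots w_m$ needs $yw_{j+1}\in E(G)$. This fails for an arbitrary $A$-vertex $w_j$, because after your reduction $w_{j+1}\in A$ whenever $w_j\in A$ and $j<m$. The move is valid only for $j=m$.

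The missing idea is a single rotation at an end of $P$; you need neither $w_jw_{j+1}$ nor $ab$.
If $B_1\neq\emptyset$, then $v_1\in B$ and $x\,v_{k-1}\dots v_1\,y\,w_1\dots w_m$ is a Hamiltonian path of $G$.
If $A_2\neq\emptyset$, then $w_m\in A$ and $v_1\dots v_{k-1}\,x\,w_m\dots w_1\,y$ is one. This is your degenerate-case move with $j=m$, and it works verbatim when $P_1\neq\emptyset$.
If neither applies, then $V(P_1)=A$ and $V(P_2)=B$, and your final construction finishes the proof.

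For comparison, the paper adds a universal vertex $z$. Your $P$ then becomes the $x$--$y$ Hamiltonian path $x\,v_{k-1}\dots v_1\,z\,w_m\dots w_1\,y$ of $G+z$. The only common neighbour of $x$ and $y$ in $G+z$ is $z$, so the Bondy--Chv\'atal labelling there has a single transition, at $z$. That one counting argument handles your interior crossings (away from $z$) and the two end rotations above (next to $z$) together. It yields the same split $X=\{x\}\cup N(x)$, $Y=\{y\}\cup N(y)$, joined by an $X$--$Y$ edge as in your last step. With the end rotations added, your route is a complete proof that avoids the auxiliary vertex and reads the partition directly off the hypotheses, at the cost of a small case split.
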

\begin{proof}
If $G$ is traceable then $G+xy$ is clearly traceable.

We now prove the converse. Add a vertex $z$ to $G$ such that $vz\in E(G+z)$ for every $v\in V(G)$. Write $K\coloneqq G+z$ and $M\coloneqq n+1=|V(K)|$.

Suppose that $K+xy$ is Hamiltonian. Consider a Hamiltonian cycle $C$ in $K+xy$. If $C$ does not contain $xy$, then $K$ is Hamiltonian and hence $G$ is traceable.

If $C$ contains $xy$, then deleting $xy$ from $C$ gives a Hamiltonian path $P\coloneqq p_1\dots p_M$ with $p_1=x$ and $p_M=y$.

For $1\le i\le M-1$, define
\[
A\coloneqq \{i:xp_{i+1}\in E(K)\} \text{ and } B\coloneqq \{i:yp_i\in E(K)\}.
\]
Clearly, $|A|=|N_K(x)|=\deg_K(x)$ and $|B|=|N_K(y)|=\deg_K(y)$.

If $i\in A\cap B$, then
\[
p_1\dots p_i p_Mp_{M-1}\dots p_{i+1}p_1
\]
is a Hamiltonian cycle in $K$, implying that $G$ is traceable.

Now suppose that $A\cap B=\emptyset$.

Then 
\[|A\cup B|=|A|+|B|=\deg_K(x)+\deg_K(y)=\deg_G(x)+\deg_G(y)+2\ge n=M-1\]
and hence $A\cup B=\{1,\dots,M-1\}$.

Observe that $1\in A$, $M-1\in B$, and whenever $i\in A$ and $i+1\in B$, we have $p_{i+1}\in N_K(x)\cap N_K(y)$. Since $\dist_G(x,y)\ge 3$, we have that $N_G(x)\cap N_G(y)=\emptyset$, and hence that $N_K(x)\cap N_K(y)=\{z\}$.

There is therefore a unique $i$ such that $i\in A$ and $i+1\in B$. Let $j$ be the least element in $B$. Since $1\in A$, we have $1\ne j$ and hence $j-1\in A$. Let $j'$ be the greatest element in $A$. Since $M-1\in B$, we have $M-1\ne j'$ and hence $j'+1\in B$.  If $j'\ne j-1$, then $j'$ and $j-1$ are two distinct values of $i$ for which $i\in A$ and $i+1\in B$, which is a contradiction. Hence $j'=j-1$. Using the fact that $A$ and $B$ partition $\{1,\dots,M-1\}$, we obtain that $A=\{1,\dots,j-1\}$ and $B=\{j,\dots,M-1\}$. We also obtain that $p_j=z$.

Write $X\coloneqq \{p_1,\dots, p_{j-1}\}$ and $Y\coloneqq \{p_{j+1},\dots,p_M\}$. Note that $X$ and $Y$ partition $V(G)$.

Consider the induced subgraph $G[X]$ of $G$. Since $\{p_2,\dots,p_{j-1}\}\subset V(G)$, $G[X]$ contains the path $p_2\dots p_{j-1}$.

We have $\{1,\dots,j-2\}\subset \{1,\dots,j-1\}=A=\{i:xp_{i+1}\in E(K)\}$, and hence $xp_i\in E(K)$ for $2\le i\le j-1$. Since $z\notin \{p_2,\dots,p_{j-1}\}$, we have $xp_i\in E(G)$ for $2\le i\le j-1$, which gives $xp_i\in E(G[X])$ for $2\le i\le j-1$. Hence given any vertex $p_k$ with $1\le k\le j-1$, we have a Hamiltonian path
\[
p_kp_{k-1}\dots p_2xp_{k+1}p_{k+2}\dots p_{j-1}
\]
in $G[X]$ with $p_k$ as an endpoint. 

Now consider $G[Y]$. Since $\{p_{j+1},\dots,p_{M-1}\}\subset V(G)$, $G[Y]$ contains the path $p_{j+1}\dots p_{M-1}$.

We have $\{j+1,\dots,M-1\}\subset \{j,\dots,M-1\}=B=\{i:yp_i\in E(K)\}$, and hence $yp_i\in E(K)$ for $j+1\le i\le M-1$. Since $z\notin \{p_{j+1},\dots,p_{M-1}\}$, we have $yp_i\in E(G)$ for $j+1\le i\le M-1$, which gives $yp_i\in E(G[Y])$ for $j+1\le i\le M-1$. Hence given any vertex $p_k$ with $j+1\le k\le M-1$, we have a Hamiltonian path
\[
p_k p_{k+1}\dots p_{M-1}y p_{k-1}\dots p_{j+1}
\]
in $G[Y]$ with $p_k$ as an endpoint.

Since $G$ is connected, there exists $ab\in E(G)$ such that $a\in X$ and $b\in Y$. Choose Hamiltonian paths $P_1$ $P_2$ in $G[X]$ and $G[Y]$ respectively with $a$ an endpoint of $P_1$ and $b$ an endpoint of $P_2$. Concatenating $P_1$ and $P_2$ gives a Hamiltonian path in $G$, thus proving that $G$ is traceable.
\end{proof}

\begin{lem}\label{lem:two-connected lemmas}
Let $G$ be a 2-connected graph with $|V(G)|=n$ and $x,y\in V(G)$ such that $xy\notin E(G)$. Then:
\begin{enumerate}
\item If $N_G(x)\cap N_G(y)=\emptyset$ and $\deg_G(x)+\deg_G(y)\ge n-3$, then $G$ is traceable if and only if $G+xy$ is traceable.
\item If $|N_G(x)\cap N_G(y)|\le 1$ and $\deg_G(x)+\deg_G(y)\ge n-2$, then $G$ is traceable if and only if $G+xy$ is traceable.
\end{enumerate}
\end{lem}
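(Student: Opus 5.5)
The plan is to follow the proof of Lemma \ref{lem:distance-3-closure} closely. One direction is trivial, so assume $G+xy$ is traceable. Put $K\coloneqq G+z$ and $M\coloneqq n+1$. By Lemma \ref{lem:G+z Hamiltonian}, $K+xy$ is Hamiltonian. If some Hamiltonian cycle of $K+xy$ avoids $xy$, we are done. Otherwise we get a Hamiltonian path $P=p_1\dots p_M$ in $K$ with $p_1=x$, $p_M=y$. Define $A,B\subseteq\{1,\dots,M-1\}$ exactly as before. If $A\cap B\neq\emptyset$, the usual rotation gives a Hamiltonian cycle in $K$, so assume $A\cap B=\emptyset$. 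Then $|A|+|B|=\deg_G(x)+\deg_G(y)+2$. We again use that an index $i$ with $i\in A$, $i+1\in B$ forces $p_{i+1}\in N_K(x)\cap N_K(y)=(N_G(x)\cap N_G(y))\cup\{z\}$.

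For part (2), $|A\cup B|\ge n=M-1$, so $A,B$ partition $\{1,\dots,M-1\}$. Since $1\in A$ and $M-1\in B$, the number of ``$A\to B$'' switches is at least one. It is at most $|N_K(x)\cap N_K(y)|\le 2$. Reading the labels along $1,\dots,M-1$, the pattern is either $A\cdots AB\cdots B$ (one switch, at $z$) or $A\cdots AB\cdots BA\cdots AB\cdots B$ (two switches, at $z$ and at the unique common neighbour $c$ of $x,y$ in $G$). The first case is exactly the situation handled in Lemma \ref{lem:distance-3-closure}, and its argument applies verbatim. In the second case, $V(G)$ splits into consecutive segments of $P$ separated by $z$ and by $c$. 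On each segment, every vertex is adjacent to $x$ (on an $A$-run) or to $y$ (on a $B$-run). This yields ``star-fan'' Hamiltonian paths with a freely chosen endpoint, as in that lemma. Note that the segment containing $x$, together with the neighbouring $B$-run ending at $y$, forms pieces joined through $c$. Then I would use 2-connectivity: $G-c$ is connected, so there is an edge $ab$ joining the $x$-side to the $y$-side that avoids $c$. With $c$ inserted at a junction where it is adjacent to both an $A$-vertex and $B$-vertex (it is adjacent to $x$ and $y$), concatenation gives a Hamiltonian path of $G$.

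For part (1), only $z$ is a common neighbour in $K$, so there is exactly one $A\to B$ switch. However $|A\cup B|\ge M-2$, so at most one index $g$ is unlabelled. The pattern is therefore $A\cdots A\,[g]\,B\cdots B$ with the switch at $z$, or the gap lies inside a run. In the latter case, the run structure is as in Lemma \ref{lem:distance-3-closure} except that one vertex $w=p_g$ or $p_{g+1}$ may be adjacent to neither $x$ nor $y$. Thus $V(G)=X\sqcup Y$ with the star-fan structure on $X\setminus\{w\}$ or $Y\setminus\{w\}$, and $w$ attached by path edges of $P$. Here I would use 2-connectivity to get two disjoint $X$–$Y$ edges, or an $X$–$Y$ edge avoiding $w$. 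Then I would route the path so that $w$ sits at the endpoint of its fan-path where it is attached via its $P$-neighbour.

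The main obstacle is this bookkeeping of the exceptional vertex ($w$ in (1), $c$ in (2)). One must check that in every position it can be placed at an end or interior junction of the concatenated path. I expect to handle it by a short case split on whether the exceptional vertex lies adjacent on $P$ to $z$, using 2-connectivity (removal of the exceptional vertex leaves $G$ connected) to supply the crossing edge.
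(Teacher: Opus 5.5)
There is a genuine gap, and it lies in the label patterns with two $A$-runs.

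\textbf{Part (2).} The pattern $A^*B^*A^*B^*$ contains a $B\to A$ change, i.e.\ an index $i$ with $i\in B$ and $i+1\in A$. The vertex $q_0\coloneqq p_{i+1}$ then satisfies $xq_0\notin E(K)$ (as $i\notin A$) and $yq_0\notin E(K)$ (as $i+1\notin B$). So your assertion that every vertex of each segment is adjacent to $x$ or to $y$ is false. Moreover, the middle segment between $z$ and the common neighbour $c$ contains neither $x$ nor $y$, so it has no hub and no fan path with a freely chosen endpoint.

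The troublesome vertex is $q_0$, not $c$, which is adjacent to both $x$ and $y$. A crossing edge supplied by the connectivity of $G-c$ does not rescue the argument. The $x$-adjacent vertices now form two separate subpaths of $P$, joined to each other only through $x$. Hence the Hamiltonian paths available on that side have very restricted endpoints.

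\textbf{Part (1).} Part (1) has the same hole. ``Only $z$ is a common neighbour'' bounds only the number of \emph{adjacent} pairs $i\in A$, $i+1\in B$. A second change from an $A$-run to a $B$-run can be bridged by the unlabelled position. This gives the pattern $A^*B^*A^*\square B^*$ (and its reverse), which your enumeration omits and which again contains such a vertex $q_0$. Conversely, $A\cdots A\,\square\,B\cdots B$ cannot occur, since $z=p_j$ forces $j-1\in A$ and $j\in B$.

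\textbf{What is missing.} You need the paper's case (c), which treats both two-run patterns at once. Write $P=xp_1'\dots p_{a-1}'zu_1\dots u_{b-1}q_0q_1\dots q_rw_1\dots w_my$, with the first $A\to B$ change at $z$; in part (2), $q_r$ is your $c$, while in the part (1) pattern $yq_r\notin E(G)$. Then $xp_i'$, $xq_i$ ($i\ge 1$), $yu_i$ and $yw_i$ all lie in $E(G)$.

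If $b\ge 2$, the explicit path
\[
p_1'\dots p_{a-1}'\,x\,q_1q_0u_{b-1}\dots u_1\,y\,w_m\dots w_1\,q_r\dots q_2
\]
threads $q_0$ between its two path-neighbours. It needs no connectivity at all, with the obvious modifications when $r=1$ or $m=0$; the latter occurs only in part (2), where $yq_r\in E(G)$.

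If $b=1$, the only neighbour of $q_0$ on $P$ lying in $G$ is $q_1$. This is where 2-connectivity is genuinely used. Since $\deg_G(q_0)\ge 2$, $q_0$ has a neighbour $v\ne q_1$, which is some $p_i'$, some $q_i$ with $2\le i\le r$, or some $w_i$. In each case one reroutes explicitly.

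Your treatment of the single-change patterns ($A^*B^*$, and a gap inside one run handled via a crossing edge avoiding one bad endpoint) is essentially the paper's cases (a) and (b). Without the two-run case, however, the proof is incomplete.
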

\begin{proof}
If $G$ is traceable then $G+xy$ is traceable. Hence in both cases it suffices to prove the converse.

We now suppose that $G+xy$ is traceable. Add a vertex $z$ to $G$ such that $vz\in E(G+z)$ for every $v\in V(G)$. Write $K\coloneqq G+z$ and $M\coloneqq n+1=|V(K)|$. Since $G+xy$ is traceable, Lemma \ref{lem:G+z Hamiltonian} gives us that $K+xy$ is Hamiltonian. Consider a Hamiltonian cycle $C$ in $K+xy$. If $C$ does not contain $xy$, then $K=G+z$ is Hamiltonian, and hence $G$ is traceable by Lemma \ref{lem:G+z Hamiltonian}. Now suppose $C$ contains $xy$. Deleting $xy$ from $C$ gives a Hamiltonian path $P\coloneqq p_1\dots p_M$ with $p_1=x$ and $p_M=y$.

For $1\le i\le M-1$, define $A\coloneqq \{i:xp_{i+1}\in E(K)\}$ and $B\coloneqq \{i:yp_i\in E(K)\}$. Clearly, $|A|=|N_K(x)|=\deg_K(x)$ and $|B|=|N_K(y)|=\deg_K(y)$.

If $i\in A\cap B$, then
\[
p_1\dots p_i p_Mp_{M-1}\dots p_{i+1}p_1
\]
is a Hamiltonian cycle in $K$, implying that $G$ is traceable.

Now suppose that $A\cap B=\emptyset$. Then $|A\cup B|=|A|+|B|=\deg_K(x)+\deg_K(y)=\deg_G(x)+\deg_G(y)+2$.

We will say that $1\le i\le M-1$ is \emph{labelled} $A$ or $B$ if it belongs to $A$ or $B$ respectively. If $i\notin A\cup B$ then $i$ is said to have no label.

In case (1), we have $|A\cup B|\ge \deg_G(x)+\deg_G(y)+2\ge n-3+2=n-1$, and hence there is at most one unlabelled position.

In case (2), we have $|A\cup B|\ge \deg_G(x)+\deg_G(y)+2\ge n-2+2=n$, and hence all positions are labelled.

In both cases, 1 is labelled $A$ and $M-1$ is labelled $B$. 

From the definitions of $A$ and $B$, it is apparent that whenever $i\in A$ and $i+1\in B$, we have $p_{i+1}\in N_K(x)\cap N_K(y)$. This means that there is exactly one such $i$ in case (1) (with $p_{i+1}=z$) and at most one other such $i$ in case (2). Let $A^*$ and $B^*$ denote a (nonempty) run of vertices from $A$ and $B$ respectively. Let $\square$ denote the unlabelled position, wherever applicable. Since $p_{i+1}=z$ is adjacent to both $x$ and $y$, we have that $i$ is labelled $A$ and $i+1$ is labelled $B$. 

The possible run patterns for $P$, up to reversing the path and interchanging $A$ and $B$, are:
\begin{table}[h]
\centering
\begin{tabular}{c|l}
\textbf{Case} & \textbf{Run pattern} \\ \hline
(1) &
$A^*B^*$,\ $A^*B^*\square B^*$,\ $A^*B^*A^*\square B^*$ \\[2mm]
(2) &
$A^*B^*$,\ $A^*B^*A^*B^*$
\end{tabular}
\caption{List of possible run patterns.}
\label{table:run patterns}
\end{table}

We now present a general observation that will allow us to construct a Hamiltonian path in each case. Suppose a graph contains a path $Q\coloneqq w_0w_1\dots w_mh$ such that $h$ is not adjacent to at most one of $w_0,w_1,\dots,w_m$. We claim that there exists a Hamiltonian path of $G[V(Q)]$ ending at any given vertex, except when $hw_0\notin E(Q)$, in which case we can guarantee the existence of a Hamiltonian path of $G[V(Q)]$ ending at any given vertex except $w_1$. In all cases, $Q$ itself is a Hamiltonian path of $G[V(Q)]$ ending at $h$. Suppose that $h$ is adjacent to all other vertices in $Q$. For $0\le r<m$,
\[
w_rw_{r-1}\dots w_0 hw_{r+1}\dots w_m
\]
is a Hamiltonian path of $G[V(Q)]$ ending at $w_r$ and $w_m$. Now suppose that $h$ is adjacent to all vertices in $Q$ except $w_j$ for some given $0\le j\le m$. First consider the case where $j\ge 1$. In this case, for $r\in\{0,1,\dots,m-1\}\setminus\{j-1\}$,
\[
w_rw_{r-1}\dots w_0 hw_{r+1}\dots w_m
\]
is a Hamiltonian path of $G[V(Q)]$ ending at $w_r$ and $w_m$. When $j=1$, the path
\[
w_1w_2\dots w_mhw_0
\]
has $w_{j-1}=w_0$ as an endpoint, and when $j\ge 2$, the path
\[
w_0w_1\dots w_{j-2}hw_mw_{m-1}\dots w_{j-1}
\]
has $w_{j-1}$ as an endpoint. If $j=0$ and $r\ge 2$, the path
\[
w_0w_1\dots w_{r-1}hw_mw_{m-1}\dots w_r
\] has $w_r$ as an endpoint, and $Q$ has $w_0$ as an endpoint.

We now consider the run patterns listed in Table \ref{table:run patterns}:

\begin{itemize}
\item[(a)] $A^*B^*$ (cases (1) and (2)):
Write $P=xp_1'\dots p_r'zq_1\dots q_s y$. Define $L\coloneqq \{x,p_1',\dots,p_r'\}$ and $R\coloneqq \{q_1,\dots,q_y,s\}$. Then $x$ is adjacent to every other vertex in $L$ and $y$ is adjacent to every other vertex in $R$. Hence both $G[L]$ and $G[R]$ have Hamiltonian paths ending at any given vertex. Since $G$ is connected, there exists $lr\in E(G)$ such that $l\in L$ and $r\in R$. Concatenate Hamiltonian paths of $G[L]$ and $G[R]$ ending at $l$ and $r$ respectively along $lr$ to obtain a Hamiltonian path in $G$.

\item[(b)] $A^*B^*\square B^*$ (case (1)):
Write $P=xp_1'\dots p_{a-1}'zu_1\dots u_bw_0\dots w_{c-1}y$ ($a,b,c\ge 1$), and suppose that $y$ is not adjacent to $u_b$ (to account for the unlabelled position). Define $L\coloneqq \{x,p_2,\dots,p_a\}$ and $R\coloneqq V(G)\setminus L$. Then $x$ is adjacent to every other vertex in $L$, implying that $G[L]$ has a Hamiltonian path ending at any given vertex. We have that $y$ is adjacent to $u_1,\dots u_{b-1},w_0,\dots, w_{c-1}$. If $b\ge 2$, then as seen above, $G[R]$ has a Hamiltonian path ending at any given vertex. If $b=1$, then $G[R]$ has a Hamiltonian path ending at any given vertex except possibly at $w_0$. If there exists $lr\in E(G)$ with $l\in L$ and $r\in R\setminus \{w_0\}$, then we concatenate Hamiltonian paths of $G[L]$ and $G[R]$ ending at $l$ and $r$ respectively along $lr$ to obtain a Hamiltonian path in $G$. Now suppose that every edge between a vertex in $L$ and a vertex in $R$ is incident with $w_0$. Then $G-w_0$ is not connected, since it contains no path from any vertex of $L$ to any vertex of $R\setminus\{w_0\}$. This contradicts the fact that $G$ is 2-connected. A similar argument works for the pattern $A^*\square A^* B^*$, since it is obtained by reversing $P$ and interchanging $A$ and $B$ in the pattern $A^*B^*\square B^*$.

\item[(c)] $A^*B^*A^*B^*$ (case (2)) and $A^*B^*A^*\square B^*$ (case (1)):
Write 
\[P=xp_1'\dots p_{a-1}'zu_1\dots u_{b-1}q_0\dots q_c w_1\dots w_my.\] 
For the $A^*B^*A^*B^*$ pattern, which falls under case (2), we may assume that the first $A-B$ transition occurs at $z$, by reversing $P$ and interchanging $A$ and $B$ if necessary. In case (2), the second $A-B$ transition occurs at $q_c$, and $yq_c\in E(G)$. In case (1), $yq_c\notin E(G)$, accounting for the unlabelled position. In both cases, all edges of $P$, except those incident with $z$, are edges of $G$, and we have $xp_i',xq_i,yu_i,yw_i\in E(G)$ for the respective index ranges.

Suppose $b\ge 2$. If $c=1$, then
\[
p_1'\dots p_{a-1}'xq_1q_0u_{b-1}u_{b-2}\dots u_1yw_mw_{m-1}\dots w_1
\] is a Hamiltonian path in $G$.

If $c\ge 2$, then
\[
p_1'\dots p_{a-1}'xq_1q_0u_{b-1}u_{b-2}\dots u_1yw_mw_{m-1}\dots w_1q_cq_{c-1}\dots q_2
\] is a Hamiltonian path in $G$. If $m=0$ in the above path, then $q_c$ occurs directly after $y$.

Now suppose $b=1$. In this case, the vertices $u_1,\dots,u_{b-1}$ do not occur, and hence
\[
P=xp_1'\dots p_{a-1}'zq_0\dots q_c w_1\dots w_my.
\]
Then $q_0$ has neighbours $z$ and $q_1$ in $P$. Since $z\notin V(G)$, the only neighbour of $q_0$ in $P$ among the vertices of $G$ is $q_1$. If $xq_0\in E(G)$ or $yq_0\in E(G)$ then we would obtain a doubly labelled position, which is a contradiction. Since $G$ is 2-connected, we have $\deg_G(q_0)\ge 2$ and hence that $q_0$ has a neighbour $v\ne q_1$ in $G$. The only possibilities for this neighbour are $p_i'$ for some $1\le i\le a-1$, $q_i$ for some $2\le i\le c$, or $w_i$ for some $1\le i\le m$.

If $v=p_i'$ for some $1\le i\le a-1$, then
\[
p_1'\dots p_{i-1}'xp_{a-1}p_{a-2}\dots p_iq_0q_1\dots q_cw_1\dots w_my
\] is a Hamiltonian path in $G$.

If $v=q_i$ for some $2\le i\le c$, then
\[
p_1'\dots p_{a-1}'xq_{i-1}q_{i-2}\dots q_0q_iq_{i+1}\dots q_cw_1\dots w_my
\] is a Hamiltonian path in $G$.

If $v=w_1$, then
\[
p_1'\dots p_{a-1}'xq_cq_{c-1}\dots q_1q_0w_1\dots w_my
\] is a Hamiltonian path in $G$.

Now suppose $v=w_i$ for some $2\le i\le m$. If $c=1$, then
\[
p_1'\dots p_{a-1}'xq_1q_0w_iw_{i+1}\dots w_myw_{i-1}w_{i-2}\dots w_1
\] is a Hamiltonian path in $G$.

If $c\ge 2$, then
\[
p_1'\dots p_{a-1}'xq_1q_0w_iw_{i+1}\dots w_myw_{i-1}w_{i-2}\dots w_1q_cq_{c-1}\dots q_2
\] is a Hamiltonian path in $G$.

A similar argument works for the pattern $A^*\square B^* A^*B^*$, since it is obtained by reversing $P$ and interchanging $A$ and $B$ in the pattern $A^*B^*A^*\square B^*$.
\end{itemize}
\end{proof}

\section{Applying the closure lemmas}\label{sec 4}

\begin{thm}\label{thm:2d+2}
If $G$ satisfies \eqref{the hypothesis} and $n= 2d_2+2$ then $G$ is traceable.
\end{thm}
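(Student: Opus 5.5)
The plan is to compare the degree sequence with Chv\'atal's condition (Lemma~\ref{lem:Chvatal}), and to repair the places where it fails with the closure lemmas of \S~\ref{sec 3}. With $n=2d_2+2$ we have $\frac{n+1}{2}=d_2+\tfrac32$, so we must check $d_i\ge i$ for $1\le i\le d_2+1$. Connectedness gives $d_1\ge 1$, and $d_i\ge d_2\ge i$ for $2\le i\le d_2$. So the only index that can fail is $i=d_2+1$, and it fails exactly when $d_{d_2+1}=d_2$ and $d_{d_2+2}\le d_2+1$. Hence a bad degree sequence forces at least $d_2$ vertices other than $v_1$ to have degree exactly $d_2$, and the remaining vertices to have degree at most $d_2+1$. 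The aim is to show that in this situation we can add edges, without changing traceability, until Chv\'atal's condition holds.

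The hypothesis \eqref{the hypothesis} supplies the structure. For any $v$, the set $V\setminus\even(v)$ of vertices at odd distance from $v$ has size at least $n-d_2-1=d_2+1$. In particular, a vertex of degree $d_2$ has a vertex at distance $3$, so $\diam\ge 3$. Now take two non-adjacent vertices $x,y$ that are not the minimum-degree vertex. They satisfy $\deg(x)+\deg(y)\ge 2d_2=n-2$. If $\dist(x,y)\ge 3$, Lemma~\ref{lem:distance-3-closure} lets us add $xy$. If $\dist(x,y)=2$ and the degree sum is at least $n-1$, Lemma~\ref{lem:ordinary-path-closure} lets us add $xy$. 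So I would form the closure $G'$ of $G$ under these two operations. Some care is needed here: the degree bounds must be checked in the current graph, and adding edges can shrink distances. Because of this, I would order the additions so that all distance-$\ge3$ pairs among the degree-$d_2$ vertices are handled first, in the original graph.

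The main obstacle is a non-adjacent pair $x,y$ at distance $2$ with $\deg(x)=\deg(y)=d_2$, to which neither lemma applies. To handle it I would count. Every neighbour of $y$ is at distance $1$ or $3$ from $x$, so $N(x)\cup N(y)$ lies among the odd-distance vertices of $x$. If $c=|N(x)\cap N(y)|$, then exactly $c$ vertices lie outside $N(x)\cup N(y)\cup\{x,y\}$. Combining this with $|\even(x)|\le d_2+1$ and $|\even(y)|\le d_2+1$ should force $c$ to be small and pin down the distance layers around $x$ and $y$. I would then try to show one of two outcomes. Either such pairs cannot coexist with the bad degree sequence; for instance, the $\ge d_2$ vertices of degree $d_2$ would need too many mutually even distances, which contradicts the bound on $\evendist$. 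Or else $G$ has the rigid structure of two cliques of order about $d_2+1$ joined through a cut-vertex or a single edge, and then a Hamiltonian path can be written down directly. If the counting is not strong enough, my fallback is to treat the subcase $c\le 1$ with part (2) of Lemma~\ref{lem:two-connected lemmas} when $G$ is $2$-connected, and to defer the non-$2$-connected case to the block-graph argument of \S~\ref{sec 5}.

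Once these pairs are dealt with, every pair of non-minimum-degree vertices is adjacent in the closure $G'$, so $d_{d_2+2}(G')\ge n-2\ge d_2+2$ and Chv\'atal's condition holds in $G'$. Lemma~\ref{lem:Chvatal} then makes $G'$ traceable, and undoing the closure steps shows that $G$ is traceable.
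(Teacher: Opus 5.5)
\textbf{There is a genuine gap: the step that makes the closure work is missing.}

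Your framework is the paper's. You close $G$ under Lemmas \ref{lem:ordinary-path-closure} and \ref{lem:distance-3-closure} without touching the vertex $z$ of degree less than $d_2$ (if any), then apply Lemma \ref{lem:Chvatal} to the closure. Your target, that any two vertices other than $z$ end up adjacent in the closure, is in fact true. But you never prove it. The obstacle you correctly isolate, non-adjacent pairs of degree $d_2$ at distance $2$, is left to a plan that does not work.

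\begin{itemize}
\item Your local count rests on a false claim. When $\dist(x,y)=2$, a neighbour $b$ of $y$ can be at distance $2$ from $x$: take $b$ not adjacent to $x$ but sharing some other neighbour with $x$. So $N(x)\cup N(y)$ need not consist of odd-distance vertices of $x$, and your count gives no bound on the number of common neighbours.
\item Your fallback is unavailable. Part (2) of Lemma \ref{lem:two-connected lemmas} only handles at most one common neighbour. \S~\ref{sec 5} cannot be invoked, since Lemmas \ref{lem:nobridge} and \ref{lem:rooted-end-block} themselves use Theorem \ref{thm:2d+2}, so that would be circular.
\item Smaller slip: Chv\'atal's condition at $i=d_2+1$ fails iff $d_{d_2+1}=d_2$ and $d_{d_2+2}\le d_2$, not $d_{d_2+2}\le d_2+1$.
\end{itemize}

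\textbf{The missing idea is a global count in the terminal closure, not a pair-by-pair analysis.} In a terminal closure $H$, let $S$ be the vertices other than $z$ with $\deg_H=d_2$, and $R$ those with $\deg_H\ge d_2+1$.
\begin{itemize}
\item Since $d_2+(d_2+1)=n-1$, terminality under Lemma \ref{lem:ordinary-path-closure} makes every vertex of $S$ adjacent to every vertex of $R$.
\item Since $\deg_G\ge d_2$ on $S$, no added edge meets $S$, so $N_H(u)=N_G(u)$ for $u\in S$.
\item Take $u\in S$ with $uz\in E(H)$, or any $u\in S$ if $z$ does not exist. Counting gives it exactly $n-1-d_2=d_2+1$ non-neighbours in $S\setminus\{u\}$, whatever $|S|$ is.
\item Once such $u$ are ruled out, all neighbours of $z$ lie in $R$. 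Every remaining $u\in S$ then has $d_2$ non-neighbours in $S\setminus\{u\}$, and also $\dist_G(u,z)=2$.
\item For each such non-neighbour $v\in S$ we have $\deg_H(u)+\deg_H(v)=n-2$. Terminality under Lemma \ref{lem:distance-3-closure} forces $\dist_H(u,v)=2$.
\item A path $uwv$ in $H$ uses only original edges, because none were added at $u$ or $v$. So $\dist_G(u,v)=2$.
\item Hence $\evendist^G(u)\ge d_2+2$, contradicting \eqref{the hypothesis}.
\end{itemize}
Therefore $S=\emptyset$, which gives your endpoint directly. No special ordering of the edge additions is needed.
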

\begin{proof}
Start with $G$ and add an edge between non-adjacent vertices $u,v\in V(G)$ if they satisfy either of the following conditions (with respect to the current graph at that stage):
\begin{enumerate}
\item $\deg(u)+\deg(v)\geq n-1$ 
\item  $\deg(u)+\deg(v)\geq n-2$ and $\dist(u,v) \geq 3$
\end{enumerate}

If there exists $z\in V(G)$ with $\deg_G(z)<d_2$, we impose the additional restriction that no added edge is incident with $z$. By lemmas \ref{lem:ordinary-path-closure}  and \ref{lem:distance-3-closure}, every edge addition preserves traceability in both directions. Let $H$ be the terminal graph obtained after carrying out the procedure described above, where terminality refers to the fact that there exists no pair $u,v$ of non-adjacent vertices in $H$ satisfying either of the given conditions.

Suppose that $H$ is not traceable and let $a_1\le a_2\le \dots \le a_n$ be its degree sequence. We have
\[
\frac{n+1}{2}=\frac{2d_2+3}{2}=d_2+\frac{3}{2},
\]
and hence that $1\le i<\frac{n+1}{2}$ is equivalent to $1\le i\le d_2+1$. We have $a_1\ge 1$ since $H$ is connected, and $a_i\ge d_i\ge d_2\ge i$ for $2\le i\le d_2$. Since $H$ is not traceable, by Lemma \ref{lem:Chvatal}, we have $a_{d_2+2}\le d_2$, implying that at least $d_2+2$ vertices of $H$ have degree $\le d_2$.

Suppose that $G$ has no vertex of degree less than $d_2$. Let
\[
S\coloneqq \{v\in V(H):\deg_H(v)\le d_2\} \text{ and } R\coloneqq V(H)\setminus S.
\]
Since $G$ has no vertex of degree less than $d_2$, neither does $H$. Hence
\[
S=\{v\in V(H):\deg_H(v)=d_2\}.
\]
If $u\in S$ and $v\in R$ are non-adjacent, then $\deg_H(u)+\deg_H(v) \geq d_2+(d_2+1)=n-1$, which contradicts the assumption that $H$ is terminal. Thus every vertex of $S$ is adjacent to every vertex of $R$ in $H$. Let $r\coloneqq |R|$. Then any $u\in S$ has $d_2-r$ neighbours in $S$ and hence 
\[
(|S|-1)-(d_2-r)=n-r-1-d_2+r=d_2+1
\]
non-neighbours in $S$ (other than itself). These $d_2+1$ non-neighbours each have degree $d_2$. Consider any such non-neighbour $v$. We have $\deg_H(u)+\deg_H(v)=2d_2=n-2$, and hence the terminality of $H$ gives $\dist_H(u,v)=2$. For any $s\in S$, we have $\deg_G(s)\le \deg_H(s)=d_2$. Since $G$ has no vertices of degree less than $d_2$, we must have $\deg_G(s)=d_2$ and hence that no added edges are incident with any vertex of $S$. We have $\dist_G(u,v)\ge \dist_H(u,v)=2$. Consider a path $uwv$ in $H$. Since no added edges are incident with $u$ or $v$, we must have $uw,wv\in E(G)$ and hence $\dist_G(u,v)=2$. We therefore have $\evendist^G(u)\ge d_2+2$ ($u$ itself and $d_2+1$ other non-neighbours in $S$), contradicting \eqref{the hypothesis}.

Now suppose that $G$ has a vertex $z$ of degree less than $d_2$. By construction, no added edges are incident with $z$ and hence $\deg_H(z)=\deg_G(z)$. Let
\[
S\coloneqq \{v\in V(H):\deg_H(v)\le d_2\}\setminus \{z\} \text{ and } R\coloneqq \{v\in V(H):\deg_H(v)>d_2\}.
\]
Every $u\in V(G)\setminus \{z\}$ satisfies $\deg_H(u)\ge \deg_G(u)\ge 2$, and hence
\[
S=\{v\in V(H):\deg_H(v)= d_2\}.
\]
If $u\in S$ and $v\in R$ are non-adjacent, then $\deg_H(u)+\deg_H(v) \geq d_2+(d_2+1)=n-1$, which contradicts the assumption that $H$ is terminal. Thus every vertex of $S$ is adjacent to every vertex of $R$ in $H$. Let $r\coloneqq |R|$. Consider $u\in S$ such that $uz\in E(H)$. Then $u$ has $d_2-r-1$ neighbours in $S$ and hence
\[
(|S|-1)-(d_2-r-1)=(n-r-1-1)-(d_2-r-1)=n-r-2-d_2+r+1=n-d_2-1=d_2+1
\]
non-neighbours in $S$ (other than itself). Consider any such non-neighbour $v$. We have $\deg_H(u)+\deg_H(v)=2d_2=n-2$, and hence the terminality of $H$ gives $\dist_H(u,v)=2$. For any $s\in S$, we have $\deg_G(s)\le \deg_H(s)=d_2$. Since $z\notin S$, we have $\deg_G(s)\ge d_2$, implying that $\deg_G(s)= d_2$ and hence that no added edges are incident with any vertex of $S$. We have $\dist_G(u,v)\ge \dist_H(u,v)=2$. Consider a path $uwv$ in $H$. Since no added edges are incident with $u$ or $v$, we must have $uw,wv\in E(G)$ and hence $\dist_G(u,v)=2$. We therefore have $\evendist^G(u)\ge d_2+2$ ($u$ itself and $d_2+1$ other non-neighbours in $S$), contradicting \eqref{the hypothesis}. This shows that no vertex in $S$ is adjacent to $z$ in $H$. Every $u\in S$ therefore has $d_2-r$ neighbours in $S$ and hence $d_2$ non-neighbours in $S$ (other than itself). As seen above, these $d_2$ non-neighbours in $S$ are all at distance two from $u$ in $G$. Since $G$ is connected, $z$ has a neighbour $w\in R$ (in $G$). Since $w\in R$, we have $uw\in E(H)$. Since $u\in S$, no added edge is incident with it and hence $uw\in E(G)$. Thus $\dist_G(u,z)=2$ via the path $uwz$ in $G$. We therefore have $\evendist^G(u)\ge d_2+2$, contradicting \eqref{the hypothesis}.

Hence the assumption that $H$ is not traceable leads to a contradiction, showing that $H$ is traceable. By lemmas \ref{lem:ordinary-path-closure} and \ref{lem:distance-3-closure}, we have that $G$ is traceable.

This proves (2) from Theorem \ref{thm:main}.
\end{proof}

\begin{thm}\label{thm:2d+3}
If a 2-connected graph $G$ satisfies \eqref{the hypothesis} and $n\le 2d_2+3$ then $G$ is traceable.
\end{thm}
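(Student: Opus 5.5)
By Lemma \ref{lem:nle 2d+1} and Theorem \ref{thm:2d+2}, it suffices to treat $n=2d_2+3$, with $G$ 2-connected. I would run the same closure strategy as in Theorem \ref{thm:2d+2}, now using the stronger closure operations that 2-connectivity permits (Lemma \ref{lem:two-connected lemmas}). Starting from $G$, I repeatedly add an edge $uv$ between non-adjacent vertices of the current graph whenever one of the following holds:
\begin{enumerate}
\item $\deg(u)+\deg(v)\ge n-1$;
\item $\deg(u)+\deg(v)\ge n-2$ and $|N(u)\cap N(v)|\le 1$;
\item $\deg(u)+\deg(v)\ge n-3$ and $N(u)\cap N(v)=\emptyset$.
\end{enumerate}
As before, I forbid added edges at a vertex $z$ with $\deg_G(z)<d_2$, if one exists. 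Adding edges preserves 2-connectivity, so Lemmas \ref{lem:ordinary-path-closure} and \ref{lem:two-connected lemmas} show that every step preserves traceability in both directions. Let $H$ be the terminal graph, and suppose $H$ is not traceable.

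Here $\frac{n+1}{2}=d_2+2$, so the relevant indices in Lemma \ref{lem:Chvatal} are $1\le i\le d_2+1$. We have $a_1\ge 2$ by 2-connectivity and $a_i\ge d_2\ge i$ for $2\le i\le d_2$. So failure of Lemma \ref{lem:Chvatal} forces $a_{d_2+1}\le d_2$ and $a_{d_2+3}\le d_2+2$. In particular, at least $d_2+1$ vertices of $H$ have degree at most $d_2$ (and $=d_2$ apart from $z$), and at least $d_2+3$ vertices have degree at most $d_2+2$.

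Let $S$ be the vertices of $H$ of degree exactly $d_2$, and $R$ the vertices of degree $\ge d_2+3$. Condition (1) then makes $S$ complete to $R$, because $d_2+(d_2+2)=n-1$. Members of $S$ have $\deg_G=\deg_H=d_2$, as in the previous proof, so no added edge touches them. Two non-adjacent vertices $u,v\in S$ have degree sum $2d_2=n-3$. By terminality under (3) they share a neighbour in $H$, which is a $G$-neighbour of both, so $\dist_G(u,v)=2$. Moreover, if $u\in S$ and $v$ has degree $d_2+1$, terminality under (2) gives $|N(u)\cap N(v)|\ge 2$, hence again $\dist_G(u,v)=2$. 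I would then count $\even_G(u)$ for $u\in S$. It contains $u$, all non-neighbours of $u$ among vertices of degree $\le d_2+1$, and possibly $z$ via a common neighbour in $R$, as in the last paragraph of the proof of Theorem \ref{thm:2d+2}. The aim is to show this exceeds $d_2+1$, contradicting \eqref{the hypothesis}.

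The main obstacle is the vertices of degree $d_2+2$ in $H$. A vertex $u\in S$ may be non-adjacent to a vertex $v$ of degree $d_2+2$ without any closure rule applying, and then $\dist(u,v)$ could be $3$ or more, so such $v$ do not count towards $\even(u)$. First I would let $T$ be the set of vertices of degree $d_2+1$ or $d_2+2$, and do the counting with $|S|+|T|\ge d_2+3$ together with the degree sum of $u$. If $u$ has few neighbours in $S\cup T$, it has many non-neighbours there, and I would argue that they lie at distance $2$. For a degree-$(d_2+2)$ vertex $v$, I would use $N(v)\supseteq R$ (when $R\ne\emptyset$) and $u$'s complete adjacency to $R$ to get a common neighbour. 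If $R=\emptyset$, then $H$ has maximum degree $\le d_2+2$, and I would instead apply the parity-counting trick from the start vertex $u$. I expect some residual configurations, such as $R=\emptyset$ with all of $T$ of degree $d_2+2$, to need a separate, direct Hamiltonian-path argument in a nearly regular graph.
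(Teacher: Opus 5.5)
\textbf{There is a genuine gap.} Your proposal never reaches a contradiction. It ends by naming a ``main obstacle'' and defers ``residual configurations'' to an unspecified direct Hamiltonian-path argument, so as written it does not prove the theorem. The obstacle is illusory, and it comes from an inconsistency in your own setup. You note that $d_2+(d_2+2)=n-1$. So terminality under rule (1) forces every vertex of $H$-degree $\ge d_2+2$ (none of which is $z$) to be adjacent to every vertex of $S$. Your later claim that $u\in S$ may be non-adjacent to a vertex of degree $d_2+2$ ``without any closure rule applying'' is therefore false. Setting the threshold for $R$ at $d_2+3$ is what manufactures the difficulty. Separately, failure of Lemma \ref{lem:Chvatal} at $i=d_2+1$ gives $a_{d_2+3}\le d_2+1$, not $\le d_2+2$, although you do not need this bound.

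\textbf{How to close the argument.} Take $R\coloneqq\{v:\deg_H(v)\ge d_2+2\}$ and $T\coloneqq\{v:\deg_H(v)=d_2+1\}$. Then $V(H)$ is $S\sqcup T\sqcup R$, together with $z$ if it exists, and $S\neq\emptyset$ because $a_{d_2+1}\le d_2$. For $u\in S$, let $\varepsilon=1$ if $uz\in E(G)$ and $\varepsilon=0$ otherwise. Since $R\subseteq N_H(u)=N_G(u)$, $u$ has $d_2-|R|-\varepsilon$ neighbours in $S\cup T$. It therefore has $(|S|+|T|-1)-(d_2-|R|-\varepsilon)$ non-neighbours in $(S\cup T)\setminus\{u\}$. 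This equals $d_2+2$ if $z$ does not exist and $d_2+1+\varepsilon$ if it does. Each of these non-neighbours is at distance exactly $2$ from $u$ in $G$:
\begin{itemize}
\item for those in $S$, by rule (3), as you say;
\item for $v\in T$, rule (2) gives two common $H$-neighbours of $u$ and $v$. Since $\deg_G(v)\ge d_2=\deg_H(v)-1$, at most one edge at $v$ was added, so one of them is a common $G$-neighbour. You need to state this; it is not automatic.
\end{itemize}
Hence $\evendist(u)\ge d_2+2$, contradicting \eqref{the hypothesis}, and no residual cases remain.

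\textbf{Comparison with the paper.} The paper avoids the terminal-graph analysis altogether. For each $v$ with $\deg_G(v)=d_2$, \eqref{the hypothesis} gives at least $d_2+2$ vertices at odd distance, hence two at odd distance $\ge 3$. So there is a $w$ with $\dist_G(v,w)\ge 3$ and $\deg_G(w)\ge d_2$. According as $\deg_H(w)$ equals $d_2$, $d_2+1$ or is $\ge d_2+2$, Lemma \ref{lem:two-connected lemmas}(1), Lemma \ref{lem:two-connected lemmas}(2) or Lemma \ref{lem:ordinary-path-closure} lets you add $vw$. Once every such $v$ has been raised to degree $d_2+1$, Lemma \ref{lem:Chvatal} applies directly.
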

\begin{proof}
By Lemma \ref{lem:nle 2d+1} and Theorem \ref{thm:2d+2}, it suffices to check the case where $n=2d_2+3$. Consider $v\in V(G)$ with $\deg_G(v)=d_2$. By \eqref{the hypothesis}, we have that $n-\evendist^G(v)\ge n-d_2-1=d_2+2$. Since $\deg_G(v)=d_2$, we obtain that $v$ has at least two non-neighbours at odd distance $\ge 3$ in $G$. At most one such non-neighbour has degree less than $d_2$, and hence there exists $w\in V(G)$ such that $\dist_G(v,w) \ge 3$ and $\deg_G(w) \ge d_2$.

We add edges while preserving traceability to incrementally form a graph $H$. Let $H$ initially be $G$. Whenever $v\in V(G)$ satisfies $\deg_G(v)=\deg_H(v)=d_2$, choose $w$ with $\dist_G(v,w) \ge 3$ and $\deg_G(w) \ge d_2$. Then $N_G(v)\cap N_G(w)=\emptyset$. Since $\deg_G(v)=\deg_H(v)$, we have that no added edge is incident with $v$. Every common neighbour of $v$ and $w$ in $H$ must therefore have arisen from an added edge incident with $w$.

There are three cases:
\begin{enumerate}
\item If $\deg_H(w)=d_2$, then since $\deg_G(w)\ge d_2$, we have $\deg_G(w)=d_2$. Hence no added edges are incident with $w$, implying that $N_H(v)\cap N_H(w)=\emptyset$. We also have $\deg_H(v)+\deg_H(w) = 2d_2=n-3$, and hence by case (1) of Lemma \ref{lem:two-connected lemmas}, $H+vw$ is traceable if and only if $H$ is traceable.
\item If $\deg_H(w)=d_2+1$, then since $\deg_G(w)\ge d_2$, we have that at most one added edge is incident with $w$, thus showing that $|N_H(v)\cap N_H(w)|\le 1$. We also have $\deg_H(v)+\deg_H(w)=2d_2+1=n-2$, and hence by case (2) of Lemma \ref{lem:two-connected lemmas}, $H+vw$ is traceable if and only if $H$ is traceable.
\item If $\deg_H(w)\geq d_2+2$ then $\deg_H(v)+\deg_H(w) \ge 2d_2+2=n-1$, and hence by Lemma \ref{lem:ordinary-path-closure}, $H+vw$ is traceable if and only if $H$ is traceable.
\end{enumerate}
Thus $vw$ can always be added to $H$ while preserving traceability, increasing the degree of $v$ to $d_2+1$. Repeating the operation eventually produces a graph $H$ in which every $v\in V(G)$ with $\deg_G(v)=d_2$ has $\deg_H(v)\ge d_2+1$. Hence at most one vertex of $H$ has degree less than $d_2+1$.
Let $a_1\le a_2 \le \dots \le a_n$ be the degree sequence of $H$. Then $a_1 \geq 1$ and $a_i\ge d_2+1\ge i$ for $2\le i\le d_2+1$. Hence we have $a_i\ge i$ for all $1\le i<d_2+2=\frac{n+1}{2}$. Applying Lemma \ref{lem:Chvatal} gives that $H$ is traceable. Since each edge addition preserved traceability, we have that $G$ is traceable.

This proves (3) from Theorem \ref{thm:main}.
\end{proof}

\section{The block graph}\label{sec 5}

\begin{defn}\cite[Sections 1.3, 1.4]{Diestel}
For $A,B\subseteq V$, a path $P=x_0\dots x_k$ is said to be an \emph{$A-B$ path} if $V(P)\cap A=\{x_0\}$ and $V(P)\cap B=\{x_k\}$. If $X\subseteq V\cup E$ and $A,B\subseteq V$ are such that every $A-B$ path in $G$ contains a vertex or an edge from $X$, then $X$ is said to \emph{separate} $A$ and $B$ in $G$. If $X$ separates the sets $\{a\},\{b\}\subseteq V$ and $a,b\notin X$, then $X$ is said to \emph{separate} the vertices $a$ and $b$ in $G$. If $X=\{v\}\subseteq V$ separates $A$ and $B$ in $G$, then the vertex $v$ is said to \emph{separate} $A$ and $B$ in $G$. A vertex that separates two vertices of the same component in $G$ is called a \emph{cutvertex}. If $X=\{e\}\subseteq E$ separates $A$ and $B$ in $G$, then the edge $E$ is said to \emph{separate} $A$ and $B$ in $G$. An edge that separates its endpoints is called a \emph{bridge}.
\end{defn}

\begin{lem}\label{lem:bridge-evendist}
If $G$ is connected and $uv$ is a bridge in $G$, then $\evendist(u)+\evendist(v)=n$.
\end{lem}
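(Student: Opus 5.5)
Since $uv$ is a bridge, deleting it splits $G$ into exactly two components: $G_u$, which contains $u$, and $G_v$, which contains $v$. Their vertex sets $V_u$ and $V_v$ partition $V$. The plan is to describe distances from $u$ and from $v$ in terms of these two sides, and then to show that $\even(u)$ and $\even(v)$ partition $V$. That immediately gives $\evendist(u)+\evendist(v)=n$.

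First, every $u$--$w$ path with $w\in V_v$ uses the edge $uv$, since $uv$ separates $V_u$ from $V_v$. Hence for $w\in V_v$ we have $\dist_G(u,w)=1+\dist_G(v,w)$. Symmetrically, for $w\in V_u$ we have $\dist_G(v,w)=1+\dist_G(u,w)$. (A shortest $v$--$w$ path must cross the bridge, and its first edge is then $vu$.) So for every $w\in V$ exactly one of $\dist(u,w)$ and $\dist(v,w)$ exceeds the other by exactly $1$. The two distances therefore have opposite parity.

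It follows that each $w$ lies in exactly one of $\even(u)$ and $\even(v)$. So $V=\even(u)\sqcup\even(v)$, and taking cardinalities gives the identity.

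The only step needing care is the distance formula, that is, the claim that a shortest path to the far side passes through the near endpoint of the bridge. This follows directly from the definition of a bridge: a shortest $u$--$w$ path with $w\in V_v$ must contain the edge $uv$, and since it starts at $u$, that edge is its first edge, so the rest of the path is a $v$--$w$ path.
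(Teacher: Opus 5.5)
Your proof is correct and is essentially the paper's argument. Both show that across the bridge the distances from $u$ and from $v$ differ by exactly one, so every vertex lies in exactly one of $\even(u)$ and $\even(v)$; your note that the bridge must be the first edge of a shortest path from $u$ fills in a step the paper leaves implicit.
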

\begin{proof}
Since $uv$ is a bridge, deleting it yields a graph with two components, one containing $u$ and the other containing $v$. If $x$ is in the component containing $u$, then every $v-x$ path in $G$ uses the edge $uv$, and hence $\dist_G(v,x)=\dist_G(u,x)+1$. If $y$ is in the component containing $u$, then every $u-y$ path in $G$ uses the edge $uv$, and hence $\dist_G(u,y)=\dist_G(v,y)+1$. Hence every vertex in $G$ is at an even distance from exactly one of $u$ and $v$. 
\end{proof}

\begin{lem}\label{lem:nobridge}
If $G$ satisfies \eqref{the hypothesis} and is not traceable, then $G$ does not have a bridge.
\end{lem}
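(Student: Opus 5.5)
The plan is to argue by contradiction, combining Lemma \ref{lem:bridge-evendist} with the order bounds already proved in \S\ref{sec 2} and \S\ref{sec 4}. Suppose $G$ satisfies \eqref{the hypothesis}, is not traceable, and has a bridge $uv$. Since $G$ is connected, Lemma \ref{lem:bridge-evendist} applies to $uv$ and gives
\[
n=\evendist(u)+\evendist(v).
\]

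Next I apply \eqref{the hypothesis} to each of $u$ and $v$ separately. Each of $\evendist(u)$ and $\evendist(v)$ is at most $\max\{\evendist(w):w\in V\}\le d_2+1$. Adding these two bounds gives $n\le 2d_2+2$.

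Now there are two cases. If $n\le 2d_2+1$, then Lemma \ref{lem:nle 2d+1} shows that $G$ is traceable, since it only requires connectivity. If $n=2d_2+2$, then Theorem \ref{thm:2d+2} shows that $G$ is traceable, since it only requires \eqref{the hypothesis}. In either case this contradicts the assumption that $G$ is not traceable, so $G$ has no bridge.

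I do not expect a genuine obstacle here. The only point to check is that Lemma \ref{lem:bridge-evendist} is stated exactly as needed, namely that the two even-distance sets of the endpoints of a bridge partition $V$. Once that holds, the argument reduces to the counting bound above followed by a direct appeal to the earlier results.
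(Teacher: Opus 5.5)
Your proof is correct and follows the paper's argument: Lemma \ref{lem:bridge-evendist} together with \eqref{the hypothesis} gives $n\le 2d_2+2$, and the earlier order bounds then force traceability. Your explicit split into $n\le 2d_2+1$ (Lemma \ref{lem:nle 2d+1}) and $n=2d_2+2$ (Theorem \ref{thm:2d+2}) is slightly more careful than the paper, which cites only Theorem \ref{thm:2d+2}, even though that theorem is stated just for the case $n=2d_2+2$.
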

\begin{proof}
Suppose $G$ satisfies \eqref{the hypothesis} and has a bridge $uv$. Then Lemma \ref{lem:bridge-evendist} and \eqref{the hypothesis} give $n=\evendist(u)+\evendist(v)\le 2d_2+2$. From Theorem \ref{thm:2d+2}, we have that $G$ is traceable.
\end{proof}

\begin{defn}\cite[Section 3.1]{Diestel}
A \emph{block} in a graph $G$ is a maximal connected subgraph $B$ of $G$ without a cutvertex (note that $B$ does not have cutvertices \emph{of its own}, but it may contain a cutvertex \emph{of $G$}).
\end{defn}

\begin{rem}\cite[Section 3.1]{Diestel}
Every block in a graph $G$ is a maximal 2-connected subgraph of $G$, a bridge (with its ends), or an isolated vertex.
\end{rem}

We now define the \emph{block graph} of a graph $G$.

\begin{defn}\cite[Section 3.1]{Diestel}
Let $G$ be a graph, $A$ be the set of cutvertices of $G$, and $\mathcal{B}$ be the set of blocks of $G$. The graph on vertex set $A\cup \mathcal{B}$ with edge set $\{aB:B\in\mathcal{B}, a\in B\}$ is called the \emph{block graph} of $G$.
\end{defn}

\begin{rem}
If the block graph of $G$ is a tree, then we will refer to the blocks corresponding to leaves as \emph{end-blocks}.
\end{rem}

\begin{thm}\label{thm:block-cutvertex path}
If $G$ satisfies \eqref{the hypothesis} and is not traceable, then the block graph of $G$ is a path.
\end{thm}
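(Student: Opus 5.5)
Since $G$ is connected, its block graph is a tree (standard, \cite[Lemma 3.1.4]{Diestel}). A tree is a path exactly when every vertex has degree at most $2$. So I would show two things: every cutvertex of $G$ lies in exactly two blocks, and every block contains at most two cutvertices. Both follow if I show that no configuration forces three ``branches'' hanging off a small part of $G$.

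\textbf{Parity tool.} Throughout I use Lemma \ref{lem:nobridge}, so $G$ has no bridge. Hence every block is 2-connected, and every vertex has degree $\ge 2$. The basic step mimics Lemma \ref{lem:bridge-evendist}. Let $c$ be a cutvertex, $C$ a component of $G-c$, and $u\in C$ adjacent to $c$. For every $x$ outside $C\cup\{c\}$, every $u$--$x$ path passes through $c$, so $\dist(u,x)=\dist(c,x)+1$. Hence such an $x$ lies in exactly one of $\even(u)$ and $\even(c)$. Adding $u\in\even(u)$ and $c\in\even(c)$ gives
\[
\evendist(u)+\evendist(c)\ \ge\ (n-1-|C|)+2+e(C),
\]
where $e(C)$ counts vertices of $C\setminus\{u\}$ lying in $\even(u)\cup\even(c)$. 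By \eqref{the hypothesis} the left side is at most $2d_2+2$.

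\textbf{Size of branches.} Every component $D$ of $G-c$ contains a vertex of degree $\ge d_2$. Otherwise $D$ is a single vertex of minimum degree, and its edge to $c$ would be a bridge. All neighbours of such a vertex lie in $D\cup\{c\}$, so $|D|\ge d_2$.

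\textbf{Case 1: a cutvertex $c$ in $\ge 3$ blocks.} Then $G-c$ has at least three components. Take $C$ to be one of them; the others contribute $n-1-|C|\ge 2d_2$. The displayed inequality then gives $2d_2+2\ge 2d_2+2+e(C)$, so $e(C)=0$, and every branch size bound must be tight. I then derive a contradiction from $e(C)=0$. The vertex $u$ is not a cutvertex of its block, and that block is 2-connected, so $u$ has a neighbour $w\in C$. If $wc\notin E$, then $\dist(c,w)=2$ and $w\in\even(c)$, contradicting $e(C)=0$. If $wc\in E$ for every such $w$, then $C$ contains a vertex at distance $2$ from $u$ or from $c$. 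This is where I would use $|C|\ge d_2\ge 2$ together with the tightness $|D|=d_2$ of the other branches: a branch of size exactly $d_2$ must contain a vertex adjacent to all of it and to $c$. Repeating the computation with $C$ replaced by such a tight branch should produce the missing even-distance vertex.

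\textbf{Case 2: a block $B$ with $\ge 3$ cutvertices $c_1,c_2,c_3$.} Each $c_i$ has a branch $D_i$ hanging off $B$ at $c_i$, with $|D_i|\ge d_2$. Pick adjacent vertices $u,v\in B$, or $c_i$ and a neighbour, chosen so that for two of the branches the parity relative to $u$ is opposite to that relative to $v$. The same counting as above should give $\evendist(u)+\evendist(v)$ at least the total size of two branches, plus $2$, plus slack from the third branch. That contradicts \eqref{the hypothesis}.

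\textbf{Main obstacle.} The crude counts give exactly $2d_2+2$, which only matches the bound. The hard step is extracting one extra vertex of even distance in the extremal configuration. I would try first to exploit that tight branches (size exactly $d_2$) are nearly complete and attached to $c$ through a dominating vertex, which forces distance-$2$ pairs. If that fails, I would fall back to applying Theorem \ref{thm:2d+2} when $n\le 2d_2+2$, and to a direct Hamiltonian-path construction through $c$ in the tight case.
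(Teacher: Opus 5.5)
Your reduction (the block tree is a path iff every node has degree at most $2$), the bound $|D|\ge d_2$ for every component $D$ of $G-c$, and Case 2 are sound. Case 2 in fact closes with room to spare if you take $u=c_1$ and $v$ a neighbour of $c_1$ in the branch $D_1$. Then every vertex of $D_2\cup D_3\cup\{c_2,c_3\}$ lies in exactly one of $\even(u),\even(v)$, so $\evendist(u)+\evendist(v)\ge 2d_2+4$. (The slack comes from $c_2,c_3$, not from a third branch.)

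The gap is in Case 1, which is the real content of the theorem, and your plan there cannot work. You try to beat the \emph{sum} bound $\evendist(u)+\evendist(c)\le 2d_2+2$ by finding an extra even-distance vertex inside $C$. But in the extremal configuration that sum is attained exactly. Glue three copies of $K_{d_2+1}$ at a vertex $c$: then $\evendist(c)=1$ and $\evendist(u)=2d_2+1$ for every $u\ne c$, so the sum is exactly $2d_2+2$. In this example every vertex of $C\setminus\{u\}$ is adjacent to both $u$ and $c$. So $e(C)=0$ holds with no contradiction, and your assertion that $C$ must then contain a vertex at distance $2$ from $u$ or from $c$ is false. Replacing $C$ by another, equally tight, branch changes nothing. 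Your fallbacks also fail. Since $d_2\ge 2$ (no bridges), $n\ge 3d_2+1>2d_2+2$, so Theorem \ref{thm:2d+2} is unavailable. And $G-c$ has three components, so there is no Hamiltonian path to construct.

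The missing idea is to use the \emph{individual} bound $\evendist(v)\le d_2+1$ once tightness has fixed the structure. Tightness gives exactly two other components $C_2,C_3$, each of size $d_2$. Any vertex $v\in C_j$ of degree at least $d_2$ then has all of $(C_j\cup\{c\})\setminus\{v\}$ as neighbours, in particular $c$. So all but at most one vertex of $C_2\cup C_3$ is adjacent to $c$, hence at distance $2$ from $u$. This gives $\evendist(u)\ge 2d_2\ge d_2+2$, a contradiction.

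The paper handles both of your cases with one count. It uses the fact that a tree is a path iff it has at most two leaves. The sum inequality over end-blocks gives $q\le 3$, with $q=3$ forcing $|X_i|=d_2$ and $a_i=0$, so each end-block is $K_{d_2+1}$. The individual bound then forces $\dist(c_1,c_2)$, $\dist(c_1,c_3)$, $\dist(c_2,c_3)$ to have pairwise distinct parities, which is impossible.
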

\begin{proof}
From \cite[Lemma 3.1.4]{Diestel}, we know that the block graph of $G$ is a tree. Let $B_1,\dots,B_q$ be its end-blocks.

For $1\le i\le q$, let $c_i$ be the cutvertex in $B_i$, and write $X_i\coloneqq V(B_i)\setminus \{c_i\}$. By Lemma \ref{lem:nobridge}, $G$ does not have a bridge. We therefore have $|B_i|\ge 3$ and hence $|X_i|\ge 2$. Each $X_i$ therefore contains a vertex of degree at least $d_2$, since at most one vertex in $G$ has degree less than $d_2$.

Consider $x\in X_i$ such that $\deg_G(x)\ge d_2$. We have
\begin{equation}\label{eqn:neighbourhood-contained}
N_G(x)\subseteq (X_i \cup \{c_i\})\setminus \{x\} \text{ for $x\in X_i$, $1\le i\le q$}
\end{equation} and hence
\begin{equation}\label{eqn:block-size}
|X_i|\ge d_2 \text{ for } 1\le i\le q.
\end{equation}

Let
\begin{equation}\label{eqn:a-i in bgpath}
a_i\coloneqq |\{x\in X_i:\dist_G(x,c_i)\text{ is even}\}|.
\end{equation}

Consider $x_i \in X_i \cap N_G(c_i)$. For any $j\ne i$, every path from $x_i$ to any vertex in $X_j$ passes through $c_i$. This implies that $\dist_G(x_i,v)$ and $\dist_G(c_i,v)$ have opposite parity for every $v\in X_j$. This gives
\[
\left(\sum\limits_{j\ne i} |X_j|\right) +a_i+2 \le \evendist(c_i)+\evendist(x_i)\le 2d_2+2
\], from which we obtain
\begin{equation}\label{eqn:number-of-blocks-ineq}
\left(\sum\limits_{j\ne i} |X_j|\right) +a_i\le 2d_2.
\end{equation}

Inequalities \eqref{eqn:block-size} and \eqref{eqn:number-of-blocks-ineq} immediately yield $q\le 3$. 

We now consider the case $q=3$, which forces $|X_i|=d_2$ and $a_i=0$ for $1\le i\le 3$.

There is at most one vertex in $G$ with degree less than $d_2$. Suppose such a vertex exists, and call it $u$. For $1\le i\le 3$, every $x\in X_i$ except $u$ has degree at least $d_2$, by the definition of $d_2$. By \eqref{eqn:neighbourhood-contained}, we have
\[
\deg_G(x)=|N_G(x)|\le |(X_i \cup \{c_i\})\setminus \{x\}|=d_2,
\]
and hence $\deg_G(x)=d_2$ for all $x\in X_i\setminus \{u\}$. This implies that every $x\in X_i\setminus \{u\}$ is adjacent to every other vertex in $X_i \cup \{c_i\}$. Say that $u\in X_j$ for some $1\le j\le 3$. Since every $x\in X_j\setminus \{u\}$ is adjacent to every other vertex in $X_j \cup \{c_j\}$, we have that $u$ is adjacent to every other vertex in $X_j$, giving it a degree of at least $d_2-1$. If $u$ were also adjacent to $c_j$, then it would have a degree of $d_2$, which is a contradiction. If $u$ is not adjacent to $c_j$, then $\dist_G(u,c_j)=2$, since $uxc_j$ is a path for any $x\in X_j\setminus \{u\}$. By Equation \eqref{eqn:a-i in bgpath}, $\dist_G(u,c_j)=2$ implies that $a_j\ge 1$, which is a contradiction. Hence there is no vertex in $G$ with degree less than $d_2$. This implies that each $B_i$ is a clique $K_{d_2+1}$.

For fixed $i$, each $x\in X_j$ for $j\ne i$ satisfies $\dist_G(x,c_i)=\dist_G(c_j,c_i)+1$. Let $\{i,j,k\}=\{1,2,3\}$. Suppose $\dist_G(x,c_i)$ for $x\in X_j$ and $\dist_G(y,c_i)$ for $y\in X_k$ had the same parity. Then one of $\even(c_i)$ and $\even(x_i)$ would contain $X_j \cup X_k$ along with the vertex itself, and hence one of $\evendist(c_i)$ and $\evendist(x_i)$ would have size at least $2d_2+1>d_2+1$, which contradicts \eqref{the hypothesis}. This implies that $\dist_G(c_i,c_j)$, $\dist_G(c_i,c_k)$, and $\dist_G(c_j,c_k)$ have pairwise distinct parities, which is a contradiction. We have thus shown that $q\le 2$, which implies that the block graph of $G$ is a tree with at most two leaves and hence a path.
\end{proof}

\begin{lem}\label{lem:auxiliary-endpoint}
Let $G$ be a graph with $n=|V(G)|=2k-1$ for some $k\ge 2$. Suppose there exist $A,B\subseteq V(G)$ such that $A\ne \emptyset$, $B\ne \emptyset$, $A\cap B=\emptyset$, and $ab\notin E(G)$ for all $a\in A$ and $b\in B$. Further suppose that $\deg_G(v)\ge k-1$ for all $v\in A\cup B$ and $\deg_G(v)\ge k$ for all $v\notin A\cup B$. Then $G$ has a Hamiltonian path with one endpoint in $A$ and the other endpoint in $B$.
\end{lem}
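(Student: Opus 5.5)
The plan is to reduce the statement to a Hamiltonicity question about an auxiliary graph and then control the endpoints by a rotation/crossing argument, in the style of Lemmas \ref{lem:distance-3-closure} and \ref{lem:two-connected lemmas}.

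\textbf{Step 1: a Hamiltonian path exists.} Every vertex has degree at least $k-1=\frac{n-1}{2}$. For $i<\frac{n+1}{2}=k$ we therefore have $d_i\ge k-1\ge i$, so Lemma \ref{lem:Chvatal} shows that $G$ is traceable. Equivalently, by Lemma \ref{lem:G+z Hamiltonian}, $G+z$ is Hamiltonian, where $z$ is joined to all of $V(G)$. This also follows from Dirac, since $G+z$ has $2k$ vertices and minimum degree at least $k$. Among all Hamiltonian paths $P=p_1\dots p_n$ of $G$, fix one that maximises the number of endpoints lying in $A\cup B$, and subject to that has endpoints in different sets among $A$, $B$ and $V\setminus(A\cup B)$ if possible. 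The aim is to show that the extremal path has $p_1\in A$ and $p_n\in B$ (after relabelling).

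\textbf{Step 2: Pósa rotations and the crossing count.} Define $S\coloneqq\{i:p_1p_{i+1}\in E\}$ and $T\coloneqq\{i:p_ip_n\in E\}$, both subsets of $\{1,\dots,n-1\}$. We have $|S|+|T|=\deg(p_1)+\deg(p_n)\ge 2k-2=n-1$, with strict inequality if either endpoint lies outside $A\cup B$.

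Suppose $S\cap T\neq\emptyset$, or $p_1p_n\in E$. Then $G[V]$ has a Hamiltonian cycle $C$. For every vertex $v$, the cycle gives Hamiltonian paths ending at $v$ and at either cycle-neighbour of $v$.

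Suppose instead $S\cap T=\emptyset$. Then $S$ and $T$ partition $\{1,\dots,n-1\}$, which forces a single $S$-to-$T$ switch as in Lemma \ref{lem:distance-3-closure}. Each rotation $p_1\dots p_i p_n p_{n-1}\dots p_{i+1}$ with $i\in T$ produces a new Hamiltonian path with endpoints $p_1$ and $p_{i+1}$. Symmetrically, rotations at $p_1$ produce paths with endpoints $p_{i}$ and $p_n$ for $i-1\in S$. So the set of vertices reachable as endpoints, with the other end fixed, has size at least $\deg-1$ plus one.

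\textbf{Step 3: counting forces a good endpoint pair.} Fix an endpoint $p_1\in A$; if no Hamiltonian path has an endpoint in $A\cup B$, a single rotation already gives one, since $|A\cup B|\ge 2$. The set $E_1$ of vertices $v$ for which there is a Hamiltonian path from $p_1$ to $v$ is closed under rotation. By the degree bound, $|E_1|\ge k$. If $E_1$ meets $B$, we are done. Otherwise, every $v\in E_1$ has all its neighbours among the successors $p_{j+1}$ of rotation points, so by the standard Pósa lemma $|N(E_1)|\le 2|E_1|-1$. Combining this with the degree bounds, with $n=2k-1$, and with the fact that $B$-vertices have no neighbours in $A$, I expect to conclude that $E_1$ and $B$ together exceed the available vertices, a contradiction.

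In the Hamiltonian-cycle case of Step 2, I would walk along $C$. Since $A$ and $B$ are nonadjacent, between an $A$-vertex and the next $B$-vertex on $C$ there is a stretch of vertices outside $A\cup B$. Each such vertex has degree at least $k$, and the chord-crossing argument of Lemma \ref{lem:two-connected lemmas}(c) should reroute the cycle into a path from that $A$-vertex to that $B$-vertex.

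\textbf{Main obstacle.} The hard step is Step 3: the degree sum $2k-2=n-1$ for an $(A,B)$ pair is exactly one short of the Ore threshold for Hamiltonian-connectedness, so the slack must come from the vertices outside $A\cup B$ having degree $k$. My first attempt will be to apply the single-switch structure of Lemma \ref{lem:distance-3-closure} to the pair $(p_1,b)$ for some $b\in B$ on the path. If that fails, I would instead add a vertex $a^*$ joined to $A$ and a vertex $b^*$ joined to $B$ with $a^*b^*\in E$, and seek a Hamiltonian cycle through $a^*b^*$ via a Bondy--Chvátal closure (\cite[(2.1)]{BondyChvatal}) applied to pairs outside $\{a^*,b^*\}$.
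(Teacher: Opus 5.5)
There is a genuine gap. The step that carries all the content of the lemma is never proved: you need to steer a Hamiltonian path so that one end lies in $A$ and the other in $B$. Steps~1 and~2 are routine: traceability follows from Lemma~\ref{lem:Chvatal}, and either $G$ is Hamiltonian or $S$ and $T$ partition $\{1,\dots,n-1\}$. Note that an endpoint outside $A\cup B$ already gives $|S|+|T|\ge n$, hence a Hamiltonian cycle. Step~3, however, ends with ``I expect to conclude'', and nothing you set up produces a contradiction. The P\'osa inequality $|N(E_1)|\le 2|E_1|-1$ only bounds $|E_1|$ from below. An endpoint set of size about $k$ avoiding $B$ is not excluded by counting, since $|B|\le k-1$ gives $|V\setminus B|\ge k$.

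Some intermediate claims are also wrong as stated.
\begin{itemize}
\item You assert that $S\cap T=\emptyset$ forces a single $S$-to-$T$ switch. This is imported from Lemma~\ref{lem:distance-3-closure}, where uniqueness came from $N_K(x)\cap N_K(y)=\{z\}$, a consequence of $\dist_G(x,y)\ge 3$. Here a switch at $i$ only says that $p_{i+1}$ is a common neighbour of $p_1$ and $p_n$, and there may be many such switches.
\item You hand the Hamiltonian-cycle case to the run-pattern analysis of Lemma~\ref{lem:two-connected lemmas}(c). That analysis concerns a Hamiltonian $x$--$y$ path of $G+z$ with a prescribed labelling pattern. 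It does not transfer to rerouting a Hamiltonian cycle of $G$ into an $A$--$B$ path.
\item Your fallback cannot be a black-box closure either. The guaranteed degree sum of two vertices of $A$ is only $2k-2=n-1$ in $G$. In the auxiliary graph on $2k+1$ vertices obtained by adding $a^*,b^*$, it is only $2k$. That is below the Bondy--Chv\'{a}tal threshold for Hamiltonicity, let alone for a Hamiltonian cycle through the prescribed edge $a^*b^*$. Pairs inside $V\setminus(A\cup B)$ have the same deficit there.
\end{itemize}

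The missing idea is to use the forced non-adjacencies to recover the missing unit in the Ore-type count. The paper shows that the property ``there is a Hamiltonian path with one end in $A$ and the other in $B$'' survives adding an edge $xy$ with $x,y$ both in $A$, both in $B$, or both in $C=V\setminus(A\cup B)$.
\begin{itemize}
\item For $x,y\in C$ the degree sum is at least $n+1$, and the usual crossing count works.
\item For $x,y\in A$ the sum is only $n-1$. But $x$ is never adjacent to the $B$-end $v_n$. Moreover, when $yv_1\in E(G)$ the reroute $v_i\dots v_1v_{i+1}\dots v_n$ moves the $A$-end to $x$. Together these cut the possible degree sum of $x$ and $y$ down to $n-2$, a contradiction.
\end{itemize}
Once $G[A]$, $G[B]$, $G[C]$ are cliques, the lemma is a direct construction. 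We have $|A|,|B|\le k-1$, so every vertex of $A\cup B$ has a neighbour in $C$. Then either $|C|=1$, or there are distinct $x,y\in C$ with $x$ adjacent to some $a\in A$ and $y$ adjacent to some $b\in B$. (Otherwise $|A|=|B|=k-1$ and $|C|\ge 2$, contradicting $n=2k-1$.) This yields the path running through $A$ to $a$, then $x$, through $C$, then $y$, then $b$ and the rest of $B$. Without something playing the role of these closure rules, your rotation framework does not reach the conclusion.
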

\begin{proof}
We start by proving some edge-addition rules which preserve traceability.

Rule 1: Suppose $x,y\in A$ are non-adjacent. Then $G$ has a Hamiltonian path with one endpoint in $A$ and the other endpoint in $B$ if and only if $G+xy$ does.

Proof of Rule 1: Note that $x,y\in A$ implies that
\begin{equation}\label{rule-1-ineq}
\deg_G(x)+\deg_G(y)\ge 2k-2=n-1.
\end{equation}
The forward implication is clear. We now prove the reverse implication. Consider a Hamiltonian path $P\coloneqq v_1\dots v_n$ in $G+xy$ with $v_1\in A$ and $v_n\in B$. If $P$ does not contain $xy$, then we are done. Now suppose $P$ contains $xy$. Then $x=v_i$ and $y=v_{i+1}$ for some $1\le i\le n-2$.

If $yv_1\in E(G)$ then
\[
v_iv_{i-1}\dots v_1v_{i+1}v_{i+2}\dots v_n
\] is the required path in $G$.

If $xv_{j-1},yv_j \in E(G)$ for some $2\le j\le i$, then
\[
v_1v_2\dots v_{j-1}v_iv_{i-1}\dots v_jv_{i+1}v_{i+2}\dots v_n
\] is the required path in $G$.

If $xv_j, yv_{j+1}\in E(G)$ for some $i+2\le j\le n-1$, then
\[
v_1v_2\dots v_i v_jv_{j-1}\dots v_{i+1}v_{j+1}v_{j+2}\dots v_n
\] is the required path in $G$. Hence if $G$ does not contain a Hamiltonian path with one endpoint in $A$ and the other endpoint in $B$, then we must have:
\begin{enumerate}
\item $yv_1\notin E(G)$;
\item $\{xv_{j-1},yv_j\}$ contains at most one edge in $G$ for each $2\le j\le i$; and
\item $\{xv_j, yv_{j+1}\}$ contains at most one edge in $G$ for each $i+2\le j\le n-1$.
\end{enumerate}
Since $y=v_{i+1}$, we have that $yv_{i+2}\in E(G)$. This gives $\le n-2$ edges incident with either $x$ or $y$ in $G$. The incidences that remain to be checked are $xv_i$ and $yv_{i+1}$ (which are self-edges and hence not allowed), $xv_{i+1}=xy$ which is not an edge since $x$ and $y$ were assumed to be non-adjacent in $G$, and $xv_n$ which is not an edge since $x\in A$, $v_n \in B$, and it was assumed that there are no edges between a vertex in $A$ and a vertex in $B$. There are therefore at most $n-2$ edges in $G$ that are incident with either $x$ or $y$, contradicting Inequality \eqref{rule-1-ineq}. Thus $G$ contains a Hamiltonian path with one endpoint in $A$ and the other endpoint in $B$.

Rule 2: Suppose $x,y\in B$ are non-adjacent. Then $G$ has a Hamiltonian path with one endpoint in $A$ and the other endpoint in $B$ if and only if $G+xy$ does.

Proof of Rule 2: The proof of Rule 1 can be repeated with $A$ and $B$ interchanged wherever they occur.

Rule 3: Suppose $x,y\in V(G)\setminus (A\cup B)$ are non-adjacent. Then $G$ has a Hamiltonian path with one endpoint in $A$ and the other endpoint in $B$ if and only if $G+xy$ does.

Proof of Rule 3: Note that $x,y\in V(G)\setminus (A\cup B)$ implies that
\begin{equation}\label{rule-3-ineq}
\deg_G(x)+\deg_G(y)\ge 2k=n+1.
\end{equation}
The forward implication is clear. We now prove the reverse implication. Consider a Hamiltonian path $P\coloneqq v_1\dots v_n$ in $G+xy$ with $v_1\in A$ and $v_n\in B$. If $P$ does not contain $xy$, then we are done. Now suppose $P$ contains $xy$. Then $x=v_i$ and $y=v_{i+1}$ for some $2\le i\le n-2$.

If $xv_{j-1},yv_j \in E(G)$ for some $2\le j\le i$, then
\[
v_1v_2\dots v_{j-1}v_iv_{i-1}\dots v_jv_{i+1}v_{i+2}\dots v_n
\] is the required path in $G$.

If $xv_j, yv_{j+1}\in E(G)$ for some $i+2\le j\le n-1$, then
\[
v_1v_2\dots v_i v_jv_{j-1}\dots v_{i+1}v_{j+1}v_{j+2}\dots v_n
\] is the required path in $G$.

Hence if $G$ does not contain a Hamiltonian path with one endpoint in $A$ and the other endpoint in $B$, then we must have:
\begin{enumerate}
\item $\{xv_{j-1},yv_j\}$ contains at most one edge in $G$ for each $2\le j\le i$; and
\item $\{xv_j, yv_{j+1}\}$ contains at most one edge in $G$ for each $i+2\le j\le n-1$.
\end{enumerate}
Since $y=v_{i+1}$, we have that $yv_{i+2}\in E(G)$. This gives $\le n-2$ edges incident with either $x$ or $y$ in $G$. The incidences that remain to be checked are $xv_i$ and $yv_{i+1}$ (which are self-edges and hence not allowed), $xv_{i+1}=xy$ which is not an edge since $x$ and $y$ were assumed to be non-adjacent in $G$, and $yv_1$ and $xv_n$, which are possibly edges in $G$. There are therefore at most $n$ edges in $G$ that are incident with either $x$ or $y$, contradicting Inequality \eqref{rule-3-ineq}. Thus $G$ contains a Hamiltonian path with one endpoint in $A$ and the other endpoint in $B$.

We now prove the lemma. Let $C\coloneqq V(G)\setminus (A\cup B)$ From rules 1,2, and 3, it follows that we may add any missing edges between two vertices in $A$, two vertices in $B$, or two vertices in $C$ while preserving the traceability of $G$. It therefore suffices to prove the lemma when $G[A], G[B],$ and $G[C]$ are all cliques. Consider $b\in B$. We have
\begin{equation*}
k-1\le \deg_G(b)\le n-|A|-1=2k-2-|A|,
\end{equation*}
where the second inequality follows from the fact that there are no edges between a vertex in $A$ and a vertex in $B$. This gives $|A|\le k-1$. By a similar argument, we have $|B|\le k-1$. Since $A$ is a clique, we have
\begin{equation*}
|N_G(a)\cap C|=\deg_G(a)-(|A|-1)\ge k-1-|A|+1=k-|A|\ge 1.
\end{equation*}
Hence every vertex in $A$ has a neighbour in $C$. Similarly, every vertex in $B$ has a neighbour in $C$. Note that
\begin{equation*}
|C|=2k-1-|A|-|B|\ge 2k-1-2(k-1)=1.
\end{equation*}

If $C$ is a singleton containing a vertex $z$, then $z$ is adjacent to every vertex in $A\cup B$, as seen above. Since $A$ and $B$ are cliques, we may order the vertices of $A$ and $B$ as $v_1,\dots,v_{|A|}$ and $w_1,\dots,w_{|B|}$ respectively to obtain a Hamiltonian path
\[
v_1\dots v_{|A|}zw_1\dots w_{|B|}
\] in $G$ with one endpoint in $A$ and the other endpoint in $B$.

Now suppose $|C|\ge 2$. Consider the sets
\[
S_1\coloneqq \bigcup\limits_{a\in A}(N_G(a)\cap C) \text{ and } S_2\coloneqq \bigcup\limits_{b\in B}(N_G(b)\cap C).
\]

Both these sets are nonempty, as seen above. Suppose there did not exist distinct vertices $x,y\in V(G)$ such that $x\in S_1$ and $y\in S_2$. Then $S_1=S_2=\{z\}$ for some $z\in C$. We then have
\begin{align*}
1 &= |S_1| =|\bigcup\limits_{a\in A}(N_G(a)\cap C)|\\
&\ge |N_G(a_0)\cap C| =\deg_G(a_0)-(|A|-1)\\
&\ge k-1-|A|+1 =k-|A| \\
&\ge 1
\end{align*}
for any $a_0\in A$, which gives $\deg_G(a_0)=|A|$ for any $a_0\in A$. Using $k-1\le \deg_G(a_0)=|A|\le k-1$, we obtain $|A|=k-1$. By a similar argument, we have $|B|=k-1$. Then
\[
2k-1=|V(G)|=|A\cup B\cup C|=|A|+|B|+|C|\ge 2(k-1)+2=2k,
\] which is a contradiction. Hence there exist distinct vertices $x,y\in V(G)$ such that $x\in S_1$ and $y\in S_2$.

Choose $a\in A$ adjacent to $x$ and $b\in B$ adjacent to $y$. Since $A,B,$ and $C$ are cliques, we can order their vertices as $v_1\dots v_{|A|-1}a$, $bw_1\dots w_{|B|-1}$, and $xu_1\dots u_{|C|-2}y$ to obtain the required Hamiltonian path
\[
v_1\dots v_{|A|-1}axu_1\dots u_{|C|-2}ybw_1\dots w_{|B|-1}
\] in $G$.
\end{proof}

\begin{lem}\label{lem:rooted-end-block}
Let $G$ be connected such that some vertex $t\in V(G)$ has degree one with neighbour $c$ and $G-t$ is 2-connected. Let $k\ge 2$, and assume that:
\begin{enumerate}
\item $|V(G)|\le 2k+2$;
\item at most one vertex in $V(G)\setminus\{t,c\}$ has degree less than $k$ in $G$; and
\item $\evendist(v)\le k+1$ for every $v\in V(G)$.
\end{enumerate}
Then $G$ is traceable. Equivalently, $G-t$ has a Hamiltonian path with endpoint $c$.
\end{lem}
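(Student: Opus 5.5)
The plan is to prove traceability of $G$ directly. Since $\deg_G(t)=1$, every Hamiltonian path of $G$ has $t$ as an endpoint followed by $c$, so this is equivalent to the stated reformulation. Write $n\coloneqq|V(G)|$ and let $u$ denote the (at most one) vertex of $V(G)\setminus\{t,c\}$ of degree less than $k$.

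\textbf{Step 1: degree counts.} Since $G-t$ is 2-connected, every vertex other than $t$ has degree at least $2$ in $G-t$, and $c$ has degree at least $3$ in $G$. So the degree sequence $d_1\le\dots\le d_n$ of $G$ satisfies $d_1=1$, $d_2\ge 2$, and $d_i\ge k$ for $i\ge 3$, because only $t$ and $u$ can lie below $k$ once $c$ is known to have degree $\ge k$. If $c$ also has degree below $k$, we get $d_3\ge 2$ instead, and this has to be tracked separately.

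\textbf{Step 2: Chv\'atal's condition for $n\le 2k+1$.} When $n\le 2k+1$, the indices to check in Lemma~\ref{lem:Chvatal} are $i\le k$. For these, $d_i\ge i$ holds as soon as $d_3\ge 3$ and $d_i\ge k$ for $3\le i\le k$. The exceptional situations are handled by hand:
\begin{itemize}
\item $k=2$, where $n\le 6$ is small enough for a finite check;
\item $c$ and $u$ both of degree below $k$, where the alternative $d_{n-i+1}\ge n-i$ must be used, or the structure of the bridge $tc$ exploited.
\end{itemize}
In the second situation I use Lemma~\ref{lem:bridge-evendist}: $\evendist(t)+\evendist(c)=n$. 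Also $\even(t)$ consists of $t$ together with the vertices at odd distance from $c$ in $G-t$.

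\textbf{Step 3: the case $n=2k+2$ (main obstacle).} Here the bridge identity together with hypothesis (3) forces $\evendist(t)=\evendist(c)=k+1$. Chv\'atal's condition can now fail only at $i=k+1$, which would mean at least $k+2$ vertices have degree $\le k$. I would run a closure argument modelled on the proof of Theorem~\ref{thm:2d+2}, with $d_2$ replaced by $k$:
\begin{itemize}
\item Repeatedly add edges using Lemma~\ref{lem:ordinary-path-closure} (degree sum $\ge n-1$) and Lemma~\ref{lem:distance-3-closure} (degree sum $\ge n-2$ and distance $\ge 3$), never adding edges at $t$ or $u$.
\item In the terminal graph $H$, let $S$ be the set of degree-$k$ vertices outside $\{t,u\}$ and $R$ the set of vertices of degree $>k$. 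Terminality makes $S$ complete to $R$.
\item Terminality also forces every pair of non-adjacent vertices of $S$ to be at distance exactly $2$. Since no added edge touches $S$, this distance is $2$ already in $G$.
\item Counting non-neighbours inside $S$ then gives a vertex $s\in S$ with $\evendist(s)\ge k+2$, contradicting (3).
\end{itemize}
The delicate point is $c$. It may lie in $S$, and it is the vertex through which $t$ sits at distance $2$ from $N(c)$. So for a vertex $s$ adjacent to $c$, the vertex $t$ counts as an extra even-distance vertex, and this replaces the role played by $z$ in Theorem~\ref{thm:2d+2}.

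Once $H$ is shown to be traceable, the closure lemmas transfer traceability back to $G$. Because $t$ keeps degree $1$, the resulting Hamiltonian path of $G$ starts at $t$, and deleting $t$ yields the required Hamiltonian path of $G-t$ ending at $c$.

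I expect the bookkeeping in Step 3 to be the hardest part: tracking whether $c$ or $u$ lies in $S$, and ensuring that the non-neighbour count reaches $k+1$ even when $u$ or $t$ is missing from it. If the count falls one short, I would fall back on the 2-connectivity of $G-t$ via Lemma~\ref{lem:two-connected lemmas}, which tolerates a degree sum of $n-3$, to add one more edge.
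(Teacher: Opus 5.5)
\textbf{Gap in Step 3.} Your outline (Chv\'atal's condition for $n\le 2k+1$, restricted closure for $n=2k+2$) matches the paper's skeleton. The problem is the decisive claim in Step 3, that counting non-neighbours in $S$ always yields a vertex with $\evendist\ge k+2$. This is false, so the case $n=2k+2$ cannot be closed by a contradiction with (3) alone.

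Here is a counterexample. Take $k=4$ and vertices $t,c,z,r_1,r_2,r_3,s_1,\dots,s_4$, with edges
\[tc,\ cr_1,\ cr_2,\ r_1r_2,\ \text{all } r_is_j,\ s_1s_2,\ zs_3,\ zs_4.\]
\begin{itemize}
\item This $G$ satisfies every hypothesis: every vertex has $\evendist\le 5$, only $z$ outside $\{t,c\}$ has degree below $4$, and $G-t$ is 2-connected.
\item The restricted closure adds exactly $r_1r_3$, $r_2r_3$, $cr_3$ and then stops.
\item The terminal graph has degree sequence $1,2,4,4,4,4,4,7,7,7$, so Chv\'atal's condition fails at $i=5$.
\end{itemize}
Since $G$ itself satisfies (3), no even-distance count can produce a contradiction here; a Hamiltonian path has to be built. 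The example also shows that $c$ can land in your $S$ after receiving an added edge ($cr_3$). So ``no added edge touches $S$'' fails once $c\in S$; this is why the paper excludes $c$ from $S$ and places it in $T$.

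\textbf{What the paper does instead.} The example is exactly the paper's case $T=\{t,c,z\}$. There the count only gives $|T|\ge m_x+2$, i.e.\ each $x\in S$ is adjacent to at most one of $c$ and $z$. The proof must then exhibit a Hamiltonian path $t\,c\cdots z$ in $J$, and the paper does so in two subcases.
\begin{itemize}
\item If $c$ has a neighbour in $S$, it shows there are no edges between $N_J(c)\cap S$ and $N_J(z)\cap V(K)$. It then applies the new Lemma~\ref{lem:auxiliary-endpoint} to $K=J-\{t,c,z\}$.
\item Otherwise, $S\subseteq \even(c)$ forces $|S|\le k$. The structure of $J[S]$ (a matching plus isolated neighbours of $z$) then lets it thread $S$ through the clique $R$.
\end{itemize}

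\textbf{Your fallback does not fill the gap.} Lemma~\ref{lem:two-connected lemmas} cannot substitute for this construction, for three reasons. First, $J$ has the pendant vertex $t$, so it is not 2-connected. Second, applying the lemma to $J-t$ preserves only traceability, not a Hamiltonian path ending at $c$. Third, in the example every candidate pair ($cs_j$ or $s_is_j$) has three common neighbours, so the lemma's hypotheses fail anyway.

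\textbf{A smaller point on Step 2.} The case ``$c$ and $u$ both below $k$'' needs no special treatment. At most three vertices have degree below $k$, so $d_4\ge k$. Since $\deg c\ge 3$, we get $d_3\ge 3$ unless $k=2$. In that case $d_2=k$, so the hypotheses become \eqref{the hypothesis} with $n\le 2d_2+2$, and Lemma~\ref{lem:nle 2d+1} and Theorem~\ref{thm:2d+2} apply.
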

\begin{proof}
Since $G-t$ is 2-connected, \cite[Proposition 1.4.2]{Diestel} gives $\delta(G-t)\ge \kappa(G-t)\ge 2$ and hence that $t$ is the only vertex of degree one in $G$ and that $\deg_G(c)\ge 3$.

By (2), we have that at most three vertices have degree less than $k$ in $G$, and hence that $k\le d_4$. Since $G$ is connected, we have $d_1\ge 1$. Since $G-t$ is 2-connected, every vertex in $G$ other than $t$ has degree $\ge 2$, giving $d_2\ge 2$. We therefore have $d_3\ge 2$. Suppose $d_3=2$. Then $d_2=2$. Since $\deg_G(c)\ge 3$, we must have $v_1,v_2\in V(G)\setminus \{t,c\}$ such that $\deg_G(v_1)=\deg_G(v_2)=2$. By (2), there is at most one vertex in $V(G)\setminus \{t,c\}$ with degree less than $k$ in $G$, and hence we must have $k<3$. Since $k\ge 2$, we obtain $k=2$. Since $k=d_2$, (3) gives that $G$ satisfies \eqref{the hypothesis}. Using (1) along with Lemma \ref{lem:nle 2d+1} and Theorem \ref{thm:2d+2} then gives that $G$ is traceable. We may therefore suppose that $d_3\ge 3$.

Suppose $n\le 2k+1$. Then $k\ge \frac{n-1}{2}$. For each $4\le i<\frac{n+1}{2}$, we therefore have
\[
d_i\ge d_4\ge k\ge \frac{n-1}{2}\ge i.
\] We have already shown that $d_1\ge 1$ and $d_2\ge 2$, and assumed that $d_3\ge 3$. Lemma \ref{lem:Chvatal} therefore gives that $G$ is traceable.

Hence only the case $n=2k+2$ remains to be checked. If there is a vertex in $V(G)\setminus\{t,c\}$ with degree less than $k$, call it $z$. Start with $G$ and apply the edge-addition rules from Lemma \ref{lem:ordinary-path-closure} and Lemma \ref{lem:distance-3-closure}, with the restriction that no added edge is incident with $t$ or with $z$ (if it exists). Let $J$ be the terminal graph obtained after adding edges. Then $J$ is traceable if and only if $G$ is traceable. Note that $\deg_J(t)=1$, $\deg_J(z)=\deg_G(z)$ (if it exists), and $J-t$ is 2-connected. Suppose $J$ is not traceable. Let $b_1\le \dots \le b_n$ be the degree sequence of $J$. Since $J-t$ is 2-connected, every vertex in $J$ other than $t$ has degree $\ge 2$, and $\deg_J(c)\ge 3$.

For $4\le i\le k$, we have
\[
i\le k\le d_4\le d_i\le b_i.
\] We also have $b_i\ge d_i\ge i$ for $1\le i\le 3$. We therefore obtain $b_i\ge i$ for $1\le i\le k$. Note that
\[
\frac{n+1}{2}=\frac{2k+3}{2}=k+\frac{3}{2}.
\] Since $J$ is not traceable, Lemma \ref{lem:Chvatal} gives
\[
b_{n-k-1+1}=b_{k+2}<n-k-1=k+1
\] and hence $b_{k+2}\le k$. Therefore, at least $k+2$ vertices of $J$ have degree at most $k$.

Define
\begin{equation*}
S\coloneqq \{v\in V(J)\setminus\{t,c,z\}:\deg_J(v)=k\} \text{ (with $z$ omitted if it does not exist).}
\end{equation*}

Also define
\begin{equation*}
R\coloneqq \{v\in V(J):\deg_J(v)\ge k+1\} \text{ and }
T\coloneqq V(J)\setminus (S\cup R).
\end{equation*}

Then
\begin{equation}\label{T-containment}
\{t,z\}\subseteq T \subseteq \{t,c,z\} \text{ (with $z$ omitted if it does not exist)}
\end{equation}
and
\begin{equation}\label{R-upper-bound}
|R|\le n-(k+2)=k.  
\end{equation}
Since $|T\cup S|\ge k+2\ge 4$ and $|T|\le 3$, we have $S\ne\emptyset$. Since $S\subseteq V(J)\setminus\{t,c,z\}$, we have that $\deg_G(v)\ge k$ for every $v\in S$. We also have $\deg_J(v)=k$ for every $v\in S$, by the definition of $S$. Then $k=\deg_J(v)\ge \deg_G(v)\ge k$ for every $v\in S$, and hence
\begin{equation}\label{eqn:s-degree-preserved-1}
\deg_J(v)=\deg_G(v)=k \text{ for every } v\in S,
\end{equation} implying that no added edge is incident with a vertex in $S$. For any $s\in S$ and $r\in R$, we have 
\begin{equation*}
\deg_J(s)+\deg_J(r)\ge k+(k+1)=n-1.   
\end{equation*}
Hence if $s$ and $r$ were non-adjacent in $J$, then we would be able to apply the edge-addition rule of Lemma \ref{lem:ordinary-path-closure}, contradicting the terminality of $J$ (Note that $t,z\notin S\cup R$). Hence $sr\in E(J)$ for all $s\in S$ and all $r\in R$. Since no added edge is incident with a vertex in $S$, we must have $sr\in E(G)$ for all $s\in S$ and all $r\in R$. 

Suppose there exist distinct non-adjacent vertices $r$ and $r'$ in $R$. Then $\deg_J(r)+\deg_J(r')\ge 2k+2>n-1$, implying that we would be able to apply the edge-addition rule of Lemma \ref{lem:ordinary-path-closure}, which contradicts the terminality of $J$. Hence $R$ induces a clique in $J$. Consider $x\in S$ and define
\begin{equation}\label{eqn:mxdefn}
m_x\coloneqq |N_G(x)\cap T|.  
\end{equation}
From Equation \eqref{eqn:s-degree-preserved-1}, we have $\deg_J(x)=\deg_G(x)=k$ and hence $N_G(x)=N_J(x)$. We also know that $x$ has $|R|$ neighbours in $R$ and hence $k-|R|-m_x$ neighbours in $S$. Since $|S|=2k+2-|R|-|T|$, $x$ has
\begin{equation*}
2k+1-|R|-|T|-k+|R|+m_x=k+1-|T|+m_x
\end{equation*}
non-neighbours in $S$ (other than itself). Consider some such non-neighbour $y\in S$ of $x$. From Equation \eqref{eqn:s-degree-preserved-1}, we have $\deg_J(x)+\deg_J(y)=2k=n-2$. Suppose $\dist_J(x,y)\ge 3$. Then we would be able to apply the edge-addition rule of Lemma \ref{lem:distance-3-closure}, contradicting the terminality of $J$. Hence $\dist_J(x,y)\le 2$. Since $y$ is a non-neighbour of $x$, we have $\dist_J(x,y)= 2$. Since $x,y\in S$, no added edge is incident with either of them, and hence any path $xvy$ of length 2 in $J$ must also be in $G$. This implies that $\dist_G(x,y)= 2$. We therefore obtain that all non-neighbours of $x$ in $S$ are at an even distance from $x$. Since $N_G(t)=\{c\}$, we have $\dist_G(x,t)=\dist_G(x,c)+1$, and hence that exactly one of $c$ and $t$ is at an even distance from $x$. Using the above and hypothesis (3), we obtain
\begin{equation*}
 1+k+1-|T|+m_x+1\le k+1,   
\end{equation*} which simplifies to
\begin{equation}\label{T-lower-bound}
|T|\ge m_x+2.
\end{equation}
In particular, $|T|\ge 2$.

Suppose $|T|=2$. Then Inequality \eqref{T-lower-bound} gives us 
\begin{equation}\label{eqn:mxzeroforx}
m_x=0 \text{ for all } x\in S
\end{equation}
and \eqref{T-containment} gives us $T=\{t,c\}$ or $T=\{t,z\}$.

First consider the case $T=\{t,c\}$. From equations \eqref{eqn:mxdefn} and \eqref{eqn:mxzeroforx}, we have that $c$ is not adjacent to any vertices in $S$. Since $\deg_G(c)\ge 3$, there exists $r_1\in R$ such that $cr_1\in E(G)$. Since $sr_1\in E(G)$ for every $s\in S$, we have that $\dist_G(c,s)=2$ for every $s\in S$. By hypothesis (3), we therefore have $1+|S|\le 1+k$, which gives $|S|\le k$. We already have $|R|\le k$ (Inequality \eqref{R-upper-bound}) and
\begin{equation*}
|S|+|R|=n-|T|=2k+2-2=2k,  
\end{equation*}
and hence $|S|=|R|=k$. Writing $R=\{r_1,\dots,r_k\}$ and $S=\{s_1,\dots,s_k\}$, we obtain the Hamiltonian path 
\[
tcr_1s_1r_2s_2\dots r_ks_k
\] in $J$, which contradicts the non-traceability of $J$.

Now consider the case $T=\{t,z\}$. Then $c\in R$. From equations \eqref{eqn:mxdefn} and \eqref{eqn:mxzeroforx}, we have that $z$ is not adjacent to any vertices in $S$. Since $\deg_G(z)\ge 2$, $z$ has a neighbour $r_2\in R$. Each $x\in S$ has
\begin{equation*}
k+1-|T|+m_x=k+1-2=k-1
\end{equation*} non-neighbours in $S$ at a positive even distance from it. Since $x\in S$ and $c\in R$, we have $xc\in E(G)$. The paths $xct$ and $xr_2z$ show that $t$ and $z$ are at an even distance (of two) from $x$. Finally, $x$ is at an even distance from itself. We have therefore obtained $k-1+3=k+2>k+1$ vertices at an even distance from $x$ in $G$, contradicting hypothesis (3).

Since $|T|\le 3$ (using \eqref{T-containment}), the only case that remains to be checked is $|T|=3$. In this case, $T=\{t,c,z\}$ (using \eqref{T-containment}) and $m_x\le 1$ for each $x\in S$ (using Inequality \eqref{T-lower-bound}). Hence no vertex in $S$ is adjacent to both $c$ and $z$. Define
\begin{equation*}
K\coloneqq J-\{t,c,z\},\text{ } A\coloneqq N_J(c)\cap V(K),\text{ and } B\coloneqq N_J(z)\cap V(K).
\end{equation*}
Note that $A\cap B\cap S=\emptyset$. Since $\deg_{J-t}(c)\ge 2$ and $\deg_{J-t}(z)\ge 2$, both $c$ and $z$ must have a neighbour (in $J$) among the vertices in $V(K)$. Hence $A$ and $B$ are nonempty. Since no added edges were incident to $z$, we have $B=N_G(z)\cap V(K)$. We have $V(K)=V(J)\setminus T=S\cup R$ and $|V(K)|=2k+2-3=2k-1$. Since $R$ induces a clique in $J$ and every vertex of $R$ is adjacent to every vertex of $S$ in $J$, we have that every vertex of $R$ is adjacent to every vertex of $S\cup R$ in $J$, and hence adjacent to every vertex in $K$. Consider some $x\in S$. We have $\deg_J(x)=k$. Since the only neighbour of $t$ in $J$ is $c\notin S$, we have that $x$ is not adjacent to $t$ in $J$. If $x\in A\cup B$, then $x$ is adjacent to $c$ or $z$ in $J$. Since $x$ cannot be adjacent to both $c$ and $z$ in $J$, we must have that $x$ is adjacent to exactly one of $c$ and $z$ in $J$, and hence that $\deg_K(x)=\deg_J(x)-1=k-1$. If $x\notin A\cup B$, then $x$ is adjacent to neither $c$ nor $z$ in $J$, and hence $\deg_K(x)=\deg_J(x)=k$.

We therefore have
\begin{equation}\label{eqn:degrees of x in s}
\deg_K(x)=
\begin{cases}
k-1 & \text{if } x\in S\cap (A\cup B)\\
k & \text{if } x\in S\setminus (A\cup B)
\end{cases}
\end{equation}

We now consider two cases, based on whether $A\cap S$ is empty or not.

Suppose $A\cap S\ne\emptyset$. Then write
\begin{equation*}
A_0\coloneqq A\cap S.
\end{equation*}
Choose $a\in A_0$. We have $c\in N_J(a)=N_G(a)$ (since $a\in S$), and hence
\begin{equation*}
1\ge m_a= |N_G(a)\cap T|\ge |\{c\}|=1,
\end{equation*}
giving $m_a=1$. Then $a$ has
\begin{equation*}
k+1-|T|+m_a=k+1-3+1=k-1
\end{equation*}
non-neighbours in $S$, all at a distance of two from it. The path $act$ shows that $t$ is also at a distance of two from $a$. Since $a$ therefore has at least $k+1$ vertices at an even distance from it, $z$ cannot be at an even distance from it (due to hypothesis (3)), and in particular, cannot be at a distance of two from it. Note that $z\notin N_G(a)$, since $a\in S$ and $c\in N_G(a)$. Consider some $b\in B\cap R$. We have $ab\in E(G)$ since $a\in S$ and $b\in R$, and $bz\in E(G)$, since $B\subseteq N_J(z)=N_G(z)$. Hence $abz$ is a path of length two in $G$, giving us $\dist_G(a,z)=2$, which is a contradiction. Therefore $B\cap R=\emptyset$. Hence $B\subseteq S$. Suppose there exists $b\in B$ such that $ab\in E(J)$. Then $ab\in E(G)$, since no added edge is incident with $a\in S$. This would again give a path $abz$ in $G$, showing that $\dist_G(a,z)=2$, which is a contradiction. Hence there are no edges between a vertex in $A_0$ and a vertex in $B$. Observe that
\begin{enumerate}
\item $|V(K)|=2k-1$;
\item $A_0\ne \emptyset$;
\item $B\ne\emptyset$;
\item $A_0\cap B=(A\cap S)\cap B=A\cap B\cap S=\emptyset$;
\item $ab\notin E(G)$ for all $a\in A_0$ and $b\in B$;
\item $\deg_K(v)\ge k-1$ for $v\in S\cap (A\cup B)=(S\cap A)\cup (S\cap B)=A_0\cup B$; and
\item $\deg_K(v)\ge k$ for $v\in (S\cap (V(K)\setminus (A\cup B)))\cup R = V(K)\setminus (A_0\cup B)$.
\end{enumerate}
Thus, by Lemma \ref{lem:auxiliary-endpoint}, $K$ has a Hamiltonian path
\[
P\coloneqq a'v_1\dots v_{2k-3}b',
\] where $a'\in A_0$ and $b'\in B$. Then
\[
tcPz=tca'v_1\dots v_{2k-3}b'z
\] is a Hamiltonian path in $J$, which contradicts the non-traceability of $J$.

Now consider the case where $A\cap S=\emptyset$. Then $A\subseteq R$. Since $G-t$ is 2-connected, $c$ has a neighbour $r_0\in V(K)$ (in $G$). Then
\begin{equation*}
r_0\in N_G(c)\cap V(K)\subseteq N_J(c)\cap V(K)=A\subseteq R.
\end{equation*}
Since every vertex in $S$ is adjacent to $r_0$ in $G$, we have a path $cr_0s$ of length two between $c$ and any $s\in S$. Also note that 
\begin{equation*}
N_G(c)\cap S=(N_G(c)\cap S)\cap S\subseteq (N_J(c)\cap V(K))\cap S=A\cap S=\emptyset,
\end{equation*}
and hence no $s\in S$ is a neighbour of $c$ in $G$. Therefore, all $s\in S$ satisfy $\dist_G(c,s)=2$, and thus hypothesis (3) forces $|S|\le k$. Since $|S|+|R|=2k-1$ and $|R|\le k$, we have $|R|\in\{k-1,k\}$.

Consider the case where $|R|=k$. Then $|S|=k-1$. For any $x\in S$, we have that $\deg_J(x)=\deg_G(x)=k$ (Equation \eqref{eqn:s-degree-preserved-1}) and that $xr\in E(G)$ for all $r\in R$. Thus $x$ has no neighbours outside $R$. From Equation \eqref{eqn:degrees of x in s}, we have $\deg_G(v)=\deg_J(v)=k-1$ for $v\in S\cap (A\cup B)$ and hence $S\cap (A\cup B)=\emptyset$, which yields $B\cap S=\emptyset$. We therefore have $B\subseteq R$. Suppose $A=B=\{v_0\}$ for some $v_0\in R$. Then deleting $v_0$ would separate $c$ from $V(K)\setminus \{v_0\}$, thus contradicting the 2-connectivity of $J-t$. Hence there exist $a\in A$ and $b\in B$ such that $a\ne b$. Recall that every vertex in $R$ is adjacent to every vertex in $K$. Order the vertices of $R$ as $r_1,\dots,r_k$ with $r_1\coloneqq a$ and $r_k\coloneqq b$, and the vertices of $S$ as $s_1,\dots,s_{k-1}$. Then
\[
r_1s_1\dots r_{k-1}s_{k-1}r_k
\] is a Hamiltonian path in $K$. Since $a$ and $b$ are neighbours of $c$ and $z$ respectively in $J$, we have a Hamiltonian path
\[
tcr_1s_1\dots r_{k-1}s_{k-1}r_kz
\] in $J$, thus contradicting the non-traceability of $J$.

Hence it only remains to consider the case where $|R|=k-1$. In this case, $|S|=k$. Let $B_0\coloneqq B\cap S$. Consider the induced subgraph $J[S]$ and $x\in B_0$. We have that $\deg_J(x)=k$ and that $x$ is adjacent to all $k-1$ vertices of $R$ in $J$, along with being adjacent to $z$ in $J$. Hence $x$ does not have any neighbours among the vertices of $S$ in $J$, implying that $\deg_{J[S]}(x)=0$. Now consider $y\in S\setminus B_0$. We have $\deg_J(y)=k$, $zy\notin E(J)$, $c\notin S$ and hence $y\ne c$, implying that $ty\notin E(J)$, $A\cap S=\emptyset$ implying that $cy\notin E(J)$, and that $y$ is adjacent to all $k-1$ vertices of $R$ in $J$. Hence $y$ has exactly one neighbour in $J$ among the vertices of $S$, implying that $\deg_{J[S]}(y)=1$. Thus $J[S]$ consists of $|B_0|$ isolated vertices (the vertices in $B_0$) and $\frac{|S|-|B_0|}{2}$ components of size two (the vertices in $S\setminus B_0$). We have
\begin{equation}\label{size-bounds}
|B_0|\le |B|\le |N_J(z)|=|N_G(z)|=\deg_D(z)<k.
\end{equation}

Suppose that $|B_0|>0$. The number of components of $J[S]$ is
\begin{equation*}
M\coloneqq |B_0|+\frac{k-|B_0|}{2}=\frac{k+|B_0|}{2}\le \frac{2k-1}{2}=k-\frac{1}{2},
\end{equation*}
where the inequality follows from the fact that $|B_0|<k$ (Inequality \eqref{size-bounds}), and hence
\begin{equation*}
M\le k-1=|R|, \text{ since $M$ is an integer}.
\end{equation*}
Fix some $b_0\in B_0$ and order the components of $J[S]$ such that the singleton component $\{b_0\}$ is last. Recall that every vertex in $R$ is adjacent to every other vertex in $K$. We partition $R$ into $M$ sets and order these sets arbitrarily. We construct a Hamiltonian path $P$ in $K$ by starting with some $a\in A$ and alternating between attaching to it one set of vertices from the partition of $R$ and one component of $J[S]$. Then $tcPz$ is a Hamiltonian path in $J$, contradicting the non-traceability of $J$.

Now suppose that $|B_0|=0$. Then $B\subseteq R$ and all components of $J[S]$ are of size two. In particular, $|S|=k$ is even. Since $2\le \deg_G(z)<k$, we have that $k\ge 4$. The graph $J[S]$ has
\begin{equation*}
\frac{|S|}{2}=\frac{k}{2}    
\end{equation*} components, and since $k\ge 4$, we have
\begin{equation*}
\frac{k}{2}+1\le k-1=|R|.   
\end{equation*}
Suppose $A=B=\{v_0\}$ for some $v_0\in R$. Then deleting $v_0$ would separate $c$ from $V(K)\setminus \{v_0\}$, thus contradicting the 2-connectivity of $J-t$. Hence there exist $a\in A$ and $b\in B$ such that $a\ne b$. Fix such $a$ and $b$. We now construct a Hamiltonian path $P$ in $K$. First partition $R\setminus \{a\}$ into $\frac{k}{2}$ sets and order them such that the set containing $\{b\}$ is last. Start with $a$ and alternate between attaching to it one set of vertices from the partition of $R\setminus \{a\}$ and one component of $J[S]$. While attaching the last set of vertices from the partition of $R$, order the vertices within that set such that $b$ is last. Now observe that $tcPz$ is a Hamiltonian path in $J$, thus contradicting the non-traceability of $J$ and concluding the proof.
\end{proof}

\begin{thm}
Suppose $G$ satisfies \eqref{the hypothesis}. If $G$ is not traceable, then $G$ is 2-connected.
\end{thm}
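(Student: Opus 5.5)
The plan is to argue by contradiction and reduce everything to the end-blocks, using Theorem \ref{thm:block-cutvertex path} and Lemma \ref{lem:rooted-end-block}. So suppose $G$ satisfies \eqref{the hypothesis}, is not traceable, and is not 2-connected. By Theorem \ref{thm:block-cutvertex path}, the block graph of $G$ is a path. By Lemma \ref{lem:nobridge}, every block is 2-connected with at least $3$ vertices. Since $G$ is not 2-connected, there are exactly two end-blocks $B_1,B_2$, with cutvertices $c_1,c_2$, and possibly some middle blocks between them. With $X_i=V(B_i)\setminus\{c_i\}$ as in the proof of Theorem \ref{thm:block-cutvertex path}, we already know \eqref{eqn:block-size}.

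\emph{Step 1: there are no middle blocks, so $c_1=c_2=:c$.} I would strengthen \eqref{eqn:number-of-blocks-ineq}. For $x_1\in X_1\cap N(c_1)$, every vertex outside $V(B_1)$ is at an even distance from exactly one of $c_1,x_1$. This gives
\[
|V\setminus V(B_1)|+a_1+2\le \evendist(c_1)+\evendist(x_1)\le 2d_2+2.
\]
Now suppose a middle block $B$ exists. It has at least $3$ vertices and at most two cutvertices, so it contains a non-cutvertex $w$. Assuming $w$ is not the (at most one) vertex of degree $<d_2$, we get $N(w)\subseteq V(B)$, hence $|V(B)|\ge d_2+1$.

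\begin{itemize}
\item $V\setminus V(B_1)$ contains the disjoint sets $X_2$ and $V(B)\setminus\{c_1\}$, of sizes $\ge d_2$ and $\ge d_2$. This forces equality throughout: $a_1=0$, $|X_2|=d_2$ and $|V(B)|=d_2+1$.
\item As in the $q=3$ case of Theorem \ref{thm:block-cutvertex path}, the blocks are then forced to be cliques. Since $B$ is a clique on $d_2+1$ vertices containing both $c_1$ and $c_2$, the two cutvertices are adjacent.
\item The parity argument from that case then applies. $X_2$ and $V(B)\setminus\{c_1,c_2\}$ lie at distances of the same parity from one of $c_1,x_1$. That vertex then has even-distance set of size $>d_2+1$, a contradiction.
\end{itemize}

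The degenerate case is when the only non-cutvertex of $B$ is the exceptional low-degree vertex. It needs a separate small argument, using that $\deg(u)\ge 2$ and the same counting.

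\emph{Step 2: a Hamiltonian path of each end-block ending at $c$.} Let $H_1$ be $G[V(B_1)]$ with a pendant vertex $t$ attached to $c$. I would check the hypotheses of Lemma \ref{lem:rooted-end-block} for $H_1$ with $k=d_2$.
\begin{itemize}
\item Shortest paths between vertices of $B_1$ stay inside $B_1$. The new vertex $t$ has the same distance parity as any neighbour of $c$ in $X_2$, which exists. So $\evendist^{H_1}(v)\le\evendist^G(v)\le d_2+1$.
\item Vertices of $X_1$ keep their $G$-degrees, so at most one of them has degree $<d_2$.
\item $|V(H_1)|=|X_1|+2$. The displayed inequality gives $|X_1|\le 2d_2-|X_2|\le d_2$, and $d_2\ge 2$ since no vertex other than possibly one has degree $<2$ in a graph with no bridges.
\end{itemize}
So $B_1$ has a Hamiltonian path ending at $c$, and similarly so does $B_2$. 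Gluing the two paths at $c$ gives a Hamiltonian path of $G$, a contradiction.

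The main obstacle I expect is Step 1, and in particular ruling out middle blocks when the exceptional vertex of degree $<d_2$ sits inside a middle block or at a cutvertex. I would try first to reuse verbatim the clique-forcing and parity argument from the $q=3$ case of Theorem \ref{thm:block-cutvertex path}. If that fails, the fallback is to find a Hamiltonian $c_1$--$c_2$ path through the middle blocks using their small size and high minimum degree, via Lemma \ref{lem:auxiliary-endpoint}.
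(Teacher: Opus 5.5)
Your Step 2 is essentially the paper's argument: the paper's $D_X=G[X\cup\{c,y\}]$ uses an actual neighbour $y\in Y$ of $c$ as the pendant vertex. Step 1, however, has a genuine gap, and it sits exactly in the configuration your counting forces. Your first bullet, together with its mirror image for $B_2$, gives a single middle block $B$ with $|V(B)|=d_2+1$, $|X_1|=|X_2|=d_2$ and $a_1=a_2=0$. Now take $B_1,B,B_2$ to be copies of $K_{d_2+1}$ glued at $c_1$ and $c_2$. Then $c_1c_2\in E$. The vertices of $V(B)\setminus\{c_1,c_2\}$ are at distance $1$ from $c_1$, while $X_2$ is at distance $2$, so
\[
\even(c_1)=\{c_1\}\cup X_2,\qquad \even(x_1)=\{x_1\}\cup\bigl(V(B)\setminus\{c_1\}\bigr).
\]
Both sets have size exactly $d_2+1$, and symmetrically for $c_2,x_2$. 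So no contradiction can be extracted from $c_1$ or $x_1$. Your claim that $X_2$ and $V(B)\setminus\{c_1,c_2\}$ have the same distance parity from one of them holds only when $c_1c_2\notin E$. That is precisely when your claim that the cutvertices are adjacent fails, and that claim is not forced in general anyway.

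The missing idea is to use the non-cutvertex $w$ itself. Since $N(w)\subseteq V(B)\setminus\{w\}$ and $\deg(w)\ge d_2=|V(B)|-1$, $w$ is adjacent to both $c_1$ and $c_2$. Then $a_1=a_2=0$ puts all of $X_1\cup X_2$ at even distance from $w$, so $\evendist(w)\ge 2d_2+1$. This is how the paper finishes, with $u\in Z\setminus\{c_1,c_2\}$ in the role of $w$; the clique-forcing is not even needed for this step.

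The degenerate case is also left open, and ``the same counting'' does not settle it. There $B$ is the triangle $c_1uc_2$, and your inequality only gives $|X_2|\le 2d_2-2-a_1$, which forces nothing. The paper instead uses $c_1c_2\in E$ to get $\evendist(c_1)+\evendist(c_2)\ge p+q+2$, hence $p=q=d_2$. From this, $B_1$ and $B_2$ are cliques, and finally $X\cup Y\subseteq\even(u)$; again the contradiction lives at the middle vertex. Your own fallback would also close this case at once. The path $c_1uc_2$ is a Hamiltonian $c_1$--$c_2$ path of $B$, and Step 2 still applies to both end-blocks, since the $B_2$-inequality gives $|X_1|\le 2d_2-2$.

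Finally, in Step 2 your displayed inequality bounds $|X_2|$, not $|X_1|$. The bound $|X_1|\le 2d_2-|X_2|$ does not follow; it would mean $n\le 2d_2+1$. What you actually need, $|X_1|\le 2d_2$, comes from the same inequality written for $B_2$, as in the paper.
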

\begin{proof}
Suppose $G$ satisfies \eqref{the hypothesis} and is neither traceable nor 2-connected. Then $G$ has a cutvertex. By Theorem \ref{thm:block-cutvertex path}, the block graph of $G$ is a path. Let $B_1$ and $B_2$ be its end-blocks, with cutvertices $c_1$ and $c_2$.
\begin{equation*}
X\coloneqq V(B_1)\setminus\{c_1\},\text{ } Y\coloneqq V(B_2)\setminus\{c_2\},\text{ }, p\coloneqq |X|,\text{ and } q\coloneqq |Y|.
\end{equation*}

Since $G$ satisfies \eqref{the hypothesis} and is not traceable, we may use Inequality \eqref{eqn:block-size} to obtain that $p,q\ge d_2$. Define
\begin{equation*}
a\coloneqq |\{x\in X:\dist_G(x,c_1)\text{ is even}\}| \text{ and } b\coloneqq |\{y\in Y:\dist_G(y,c_2)\text{ is even}\}|.
\end{equation*}

Suppose that the block graph of $G$ consists of at least three blocks, and let
\begin{equation*}
Z\coloneqq V(G)\setminus (X\cup Y).
\end{equation*}
Then $z\coloneqq |Z|\ge 2$, since $c_1,c_2\in Z$. Consider some $x\in X\cap N_G(c_1)$. For any $v\in Z\setminus \{c_1\}$, all paths from $v$ to $x$ must contain $c_1$, giving that $v$ is at an even distance from either $x$ or $c_1$, and $c_1\in Z$ is also in $\even(c_1)$. Similarly, for any $y\in Y$, all paths from $y$ to $x$ must contain $c_1$, giving that $y$ is at an even distance from either $x$ or $c_1$. There are $a$ vertices in $X$ that are at an even distance from $c_1$, and $x$ is at an even distance from itself. From this, we obtain
\begin{equation*}
z+q+a+1\le \evendist(c_1)+\evendist(x)\le 2d_2+2,
\end{equation*}
where the second inequality follows from \eqref{the hypothesis}. The exact same process can be repeated for $Y$, finally giving
\begin{equation}\label{eqn:tightened-block-inequality}
q+z+a\le 2d_2+1 \text{ and } p+z+b\le 2d_2+1.
\end{equation}

Suppose there exists $u\in Z\setminus\{c_1,c_2\}$. We have $N_G(u)\subseteq Z$. Suppose $\deg_G(u)\ge d_2$. Then $z\ge d_2+1$. Using this along with $p,q\ge d_2$ in Inequality \eqref{eqn:tightened-block-inequality} gives
\begin{equation*}
z=d+1,\text{ }, p=q=d_2,\text{ and } a=b=0.
\end{equation*}
By the argument used in Theorem \ref{thm:block-cutvertex path}, we obtain that $B_1$ and $B_2$ are both copies of $K_{d_2+1}$ and that $G$ has no vertex of degree less than $d_2$. Since $\deg_G(u)\ge d_2$ and $z=d_2+1$, $N_G(u)\subseteq Z$ gives $\deg_G(u)=d_2$ and hence $N_G(u)= Z$. In particular, we obtain $\{c_1,c_2\}\subseteq N_G(u)$. We therefore have that all $2d_2$ vertices in $X\cup Y$ are at an even distance (of two) from $u$. This fact, along with $u\in\even(u)$, gives $\evendist(u)\ge 2d_2+1>d_2+1$, contradicting \eqref{the hypothesis}. Hence any $u\in Z\setminus\{c_1,c_2\}$ must have degree less than $d_2$, implying that there is at most one vertex in $Z\setminus\{c_1,c_2\}$. The block graph of $G$ being a path therefore implies that there are exactly three blocks in it, namely the end-blocks $B_1$ and $B_2$, and the middle block consisting of $c_1,c_2$, and possibly a vertex $u$. Suppose that such a $u$ exists. Note that $u$ must be adjacent to both $c_1$ and $c_2$. Consider $\even(c_1)$ and $\even(c_2)$ in this case. There are $a$ vertices in $X$ at an even distance from $c_1$. Since $c_1$ and $c_2$ are adjacent, the vertices in $Y$ at an even distance from $c_1$ are exactly those that are at an odd distance from $c_2$. There are $q-b$ such vertices. Finally, $c_1$ is at an even distance from itself. We therefore obtain
\begin{equation*}
a+q-b+1\le \evendist(c_1)\le d_2+1,
\end{equation*}
and repeating the argument for $c_2$ gives
\begin{equation*}
b+p-a+1\le \evendist(c_2)\le d_2+1.
\end{equation*}
Adding these two inequalities gives $p+q\le 2d_2$. Along with $p,q\ge d_2$, this gives $p=q=d_2$. Since the only vertex in $G$ with degree less than $d_2$ is $u$, we have that $B_1$ and $B_2$ are copies of $K_{d_2+1}$, which gives $a=b=0$. This in turn implies that $\{u\}\cup X\cup Y\subseteq \even(u)$, and hence that $\evendist(u)\ge 2d_2+1>d_2+1$, contradicting \eqref{the hypothesis}. The only remaining case with at least three blocks is therefore one where $Z=\{c_1,c_2\}$. In this case, $c_1c_2$ is a bridge in $G$, contradicting Lemma \ref{lem:nobridge}.

Now suppose that there are exactly two blocks, $B_1$ and $B_2$. They must then be joined at a single vertex $c_1=c_2=c$. Consider $x\in X\cap N_G(c)$. For any $y\in Y$, all paths from $y$ to $x$ must contain $c$, giving that $y$ is at an even distance from either $x$ or $c$. There are $a$ vertices in $X$ at an even distance from $c$, and $c$ and $x$ are each at an even distance from themselves. From this, we obtain
\begin{equation*}
q+a+2\le \evendist(c)+\evendist(x)\le 2d_2+2
\end{equation*}
and hence $q+a\le 2d_2$. Repeating this argument for some $y\in Y\cap N_G(c)$ yields $p+b\le 2d_2$. We therefore have $p,q\le 2d_2$. Fix some $y\in Y\cap N_G(c)$ and consider
\begin{equation*}
D_X\coloneqq G[X\cup \{c,y\}].
\end{equation*}
Observe that
\begin{equation*}
\dist_{D_X}(v_1,v_2)=\dist_G(v_1,v_2) \text{ for all } v_1,v_2\in V(D_X)=X\cup \{c,y\},
\end{equation*}
since any edge between two vertices in $X \cup \{c\}$ are preserved and all paths between $y$ and a vertex in $X$ contain $c$ (which is adjacent to $y$). We therefore also have
\begin{align*}
\even_{D_X}(v) &=\{u\in V(D_X):\dist_{D_X}(u,v)\text{ is even}\}\\
&=\{u\in V(D_X):\dist_{G}(u,v)\text{ is even}\}\subseteq \{u\in V(G):\dist_{G}(u,v)\text{ is even}\}\\
&=\even_G(v),
\end{align*}
and hence
\begin{equation*}
\evendist^{D_X}(v)\le \evendist^G(v) \text{ for all } v\in V(D_X).
\end{equation*}
Now consider the statement of Lemma \ref{lem:rooted-end-block}. We have that $D_X$ is connected, that $y\in V(D_X)$ has degree one in $D_X$ (since $N_{D_X}(y)=\{c\}$), and that $D_X-y$ is 2-connected, since it is a block in $G$. This gives $\delta(D_x-y)\ge \kappa(D_X-y)\ge 2$ and hence that the second-smallest degree in $D_X$ is at least $2$, from which it follows that $d_2\ge 2$. Let $k=d_2$ as in the statement of Lemma \ref{lem:rooted-end-block}. Then $|V(D_X)|=2d_2+2$. Now suppose there were two vertices $v_1,v_2\in V(D_X)\setminus \{c,y\}=X$ with degree less than $d_2$ in $D_X$. We have $N_G(v)\subseteq X\cup \{c\}$ for all $v\in X$. Since $D_X$ is an induced subgraph of $G$, we therefore have $\deg_{D_X}(v)=\deg_G(v)$ for all $v\in X$, thus giving $\deg_G(v_1)<d_2$ and $\deg_G(v_2)< d_2$, which is a contradiction. Hence there is at most one vertex in $V(D_X)\setminus \{c,y\}$ with degree less than $d_2$ in $D_X$. As seen above, we have $\evendist^{D_X}(v)\le \evendist^G(v)$ for every $v\in V(D_X)$. Using \eqref{the hypothesis} gives $\evendist^{D_X}(v)\le d_2+1$ for every $v\in V(D_X)$. Thus $D_X$ satisfies the hypotheses of Lemma \ref{lem:rooted-end-block} and is therefore traceable, with $D_X-y=B_1$ having a Hamiltonian path with endpoint $c$. By the same argument, one may construct $D_Y$ to show that $B_2$ has a Hamiltonian path with endpoint $c$. Concatenating these two paths at $c$ gives a Hamiltonian path in $G$, thus showing that $G$ is traceable, which is a contradiction, thereby concluding the proof.

This proves (4) from Theorem \ref{thm:main}.
\end{proof}







\printbibliography
\end{document}